\documentclass{article}

\usepackage{amsmath,amssymb,amsthm}
\usepackage{graphicx}
\usepackage{hyperref}
\newtheorem{theorem}{Theorem}[section]
\newtheorem{lemma}[theorem]{Lemma}
\newtheorem{proposition}[theorem]{Proposition}

\newtheorem{definition}[theorem]{Definition}

\theoremstyle{remark}
\newtheorem{remark}[theorem]{Remark}

\title{The Equality Cases for the Grone--Merris-Bai Theorem}

\author{
Dongxiu Cai\thanks{Email: \texttt{diudiutse@sjtu.edu.cn}}
\and
Zhengbo Chen\thanks{Email: \texttt{czb911@sjtu.edu.cn}}
\and
Jia Yang\thanks{Email: \texttt{yangjia2020@sjtu.edu.cn}}
\and
Xiao-Dong Zhang\thanks{Corresponding author. Email: \texttt{xiaodong@sjtu.edu.cn}}
}

\date{
School of Mathematical Sciences, MOE-LSC, SHL-MAC,\\
Shanghai Jiao Tong University, Shanghai 200240, P.R. China
}

\begin{document}

\maketitle

\begin{abstract}
The Grone--Merris inequality, conjectured by Grone and Merris~(1994) and
first proved by Bai~(2011), states that for every graph $G$ of order $n$
and every $1\le k\le n$,
$\sum_{i=1}^k\lambda_i(G)\le\sum_{i=1}^k d_i^*(G)$,
where $\lambda_1\ge\cdots\ge\lambda_n$ are the Laplacian eigenvalues and
$d_1^*\ge\cdots\ge d_n^*$ is the conjugate degree sequence.
In this paper we determine exactly when equality holds.
Using the split-graph trace inequality developed by Kothari and
Tudose~(2026) in their proof of Brouwer's Laplacian conjecture---which
relies on Bai's theorem and also establishes the equivalence between the
two conjectures---together with the recent characterization of the Brouwer
equality cases by Cai, Chen, Yang and Zhang~(2027), we prove that equality
holds in the Grone--Merris inequality if and only if the graph $G$ belongs to
one of two explicitly described families.
Both families are obtained from a threshold graph by a surgical operation
at one terminal block: in the first family, edges are removed from the
initial dominating block; in the second, edges are added inside the initial
isolated block.
Our analysis yields a complete combinatorial description of all pairs $(G,k)$
for which the Grone--Merris bound is tight.
\end{abstract}

\section{Introduction}
Throughout this paper, all graphs are finite, simple and undirected.
Let \(G\) be a  simple  graph with vertex set \(V(G)\) and edge set \(E(G)\), denoted by
\(
n=|V(G)|
\)
and size
\(
m=e(G)=|E(G)|.
\) 
The Laplacian matrix of $G$ is $L(G)=D(G)-A(G)$, where $D(G)$ is the diagonal
degree matrix and $A(G)$ is the adjacency matrix.
Denote the Laplacian eigenvalues of $G$ by
\[
\lambda_1(G)\ge\lambda_2(G)\ge\cdots\ge\lambda_{n-1}(G)\ge\lambda_n(G)=0.
\]
For a degree sequence $d_1(G)\ge d_2(G)\ge\cdots\ge d_n(G)$ of $G$,
the \emph{conjugate degree sequence} $d_1^*(G)\ge d_2^*(G)\ge\cdots\ge d_n^*(G)$ is
defined by
\[
d_k^*(G)=|\{v\in V:d_G(v)\ge k\}|,\qquad 1\le k\le n.
\]
For $1\le k\le n-1$, an equivalent and often useful expression is
\[
\sum_{i=1}^k d_i^*(G)=\min_{0\le r\le n}\Bigl(rk+\sum_{i=r+1}^n d_i(G)\Bigr),
\]
where the minimum is attained precisely for integers $r$ satisfying
$d_{k+1}^*(G)\le r\le d_k^*(G)$.
When no confusion may arise, we simply write \(\lambda_i\), \(d_k\) and \(d_k^*\).
 Denote by \(\mathbf 1\) the all-one column vector of appropriate size, $I$ the identity matrix of appropriate size, and $J$ the all-one matrix of of appropriate size.
The Grone--Merris conjecture, proved by Bai and now known as the Grone--Merris--Bai theorem, is stated as follows.
\begin{theorem}\cite{GM1994,Bai2011}
Let $G$ be a simple connected graph with the Laplacian eigenvalues $\lambda_1\ge\ldots\lambda_n$, the conjugate degree sequence $ d_1^*\ge\ldots\ge d_n^*$. Then
\begin{equation}\label{gm1}
(\lambda_1(G),\ldots,\lambda_n(G))\preceq (d_1^*(G),\ldots,d_n^*(G)).
\end{equation}
Equivalently,
\begin{equation}\label{GM}
\sum_{i=1}^k \lambda_i(G)\leq \sum_{i=1}^k d_i^*(G),\ \ \
\qquad  \text{for } \ k=1, \ldots, n.
\end{equation}
\end{theorem}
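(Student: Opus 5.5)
Since Bai's proof is a genuine theorem and not a consequence of anything stated earlier here, I will sketch a self-contained route in the spirit of Bai's original argument, organized so that it later supports the equality analysis. The plan is an induction on the number of edges, combined with a reduction to the case where the Ky Fan sum is controlled by the formula
\[
\sum_{i=1}^k d_i^*=\min_{0\le r\le n}\Bigl(rk+\sum_{i=r+1}^n d_i\Bigr).
\]
By Ky Fan's principle, $\sum_{i=1}^k\lambda_i=\max\operatorname{tr}(P^{T}LP)$ over $n\times k$ matrices $P$ with orthonormal columns. So it suffices to show that for every such $P$ and every $r$ we have $\operatorname{tr}(P^TLP)\le rk+\sum_{i>r}d_i$.

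I will fix $r$, with the vertices ordered by degree, and set $S=\{1,\dots,r\}$. Then I split $L=\sum_{uv\in E}L_{uv}$ according to whether an edge lies inside $S$, between $S$ and $\bar S$, or inside $\bar S$.

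\emph{Edges touching $\bar S$.} Each such edge contributes at most $\|P^Te_u-P^Te_v\|^2\le \|P^Te_u\|^2+\|P^Te_v\|^2+\dots$. Here a cruder bound is easier: $\operatorname{tr}(P^TL_{uv}P)\le$ (row norms) can be charged to its endpoint in $\bar S$. This accounts for a total charge of $\sum_{i>r}d_i$, once I show that the $S$-endpoints need not be charged.

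\emph{Edges inside $S$.} These contribute at most $\operatorname{tr}(P^TL(K_S)P)$. For the complete graph, $L(K_S)=rI_S-J_S$, whose trace against $PP^T$ is at most $r\cdot\operatorname{tr}(P_S^TP_S)$.

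Combining the two pieces while keeping the total within $rk$ requires the split-graph comparison: replace $G$ by the graph $H$ in which $S$ becomes a clique, $\bar S$ becomes independent, and the $S$–$\bar S$ edges are kept. Proving $\sum_{i\le k}\lambda_i(G)\le\sum_{i\le k}\lambda_i(H)+\sum_{\bar S}(\text{lost degree})$ is the delicate monotonicity step. It follows Bai's edge-deletion argument: deleting an edge inside $\bar S$ lowers the Ky Fan sum by at most $2$, while increasing the degree-sum side by at least the amount needed.

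For split graphs, the statement reduces to the known Grone–Merris verification. There one uses the block structure $L(H)=\begin{pmatrix} rI-J+D_1 & -B\\ -B^T & D_2\end{pmatrix}$ and a direct interlacing/Schur complement estimate, giving $\sum_{i\le k}\lambda_i(H)\le rk+e(S,\bar S)$.

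The main obstacle is the reduction step. In particular, one must show that the Ky Fan $k$-sum does not decrease faster than the conjugate bound when edges inside $\bar S$ are removed and edges inside $S$ are added. This requires choosing $r$ between $d_{k+1}^*$ and $d_k^*$ so that the change in $\sum_{i\le k}d_i^*$ is exactly matched. I would first attempt it through the edge-by-edge interlacing $\lambda_i(G-e)\ge\lambda_{i+1}(G)$ together with the induction hypothesis, and fall back on Bai's original local perturbation argument if the bookkeeping fails.
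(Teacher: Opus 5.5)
Your reduction to the split graph $H$ is correct. It is the same first step as in the Kothari--Tudose chain the paper recalls: the first inequality in the proof of Proposition~\ref{prop:five-conditions}, with your $S,\bar S$ playing the roles of the paper's $K,S$. But you have misplaced the difficulty, because that step is immediate.

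\begin{enumerate}
\item[(a)] Adding the missing edges inside $S$ can only raise every eigenvalue.
\item[(b)] Deleting an edge $uv$ inside $\bar S$ lowers the Ky Fan $k$-sum by at most $\lambda_1(L_{uv})=2$, by Ky Fan subadditivity. Meanwhile $rk+\sum_{v\in\bar S}d(v)$ drops by exactly $2$.
\item[(c)] Since $\sum_{i\le k}d_i^*\le rk+\sum_{i>r}d_i$ holds for \emph{every} $r$, you need no choice of $r\in[d_{k+1}^*,d_k^*]$, no interlacing and no induction.
\end{enumerate}

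What remains carries the entire theorem. For every split graph $H$ with clique $S$, $|S|=r$, and every $k$, you need $\sum_{i\le k}\lambda_i(H)\le rk+e_H(S,\bar S)$. Equivalently, $\operatorname{tr}(Q(L_H-rI))\le e_H(S,\bar S)$ for every orthogonal projection $Q$.

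This split-graph step is where the genuine gap lies. You treat it as a ``known Grone--Merris verification'' via an unspecified interlacing/Schur complement estimate. By the reduction above, however, it is equivalent to the full theorem, so appealing to it is circular, and you give no estimate.

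Your charging scheme does not produce it. The clique term is at most $r\sum_{u\in S}\|P^Te_u\|^2\le rk$, yet each cross edge can contribute up to $\|P^T(e_u-e_v)\|^2=2$. That overshoots by $e_H(S,\bar S)$, and ``showing the $S$-endpoints need not be charged'' is precisely the content of the theorem.

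Crude block estimates cannot work either, because the bound is attained by every threshold graph. For $H=K_2$ ($r=1$) one has $\sum_i(\lambda_i-1)_+=1=e_H(S,\bar S)$. Simply discarding $-J_S$ from $L_H-rI$ leaves $\begin{pmatrix}1&-1\\-1&0\end{pmatrix}$, whose positive eigenvalue $(1+\sqrt5)/2$ already exceeds $1$.

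In the paper, which itself only cites Bai for this statement, the split-graph inequality is Theorem~\ref{thm:split-trace} (Lemma~3.3 of Kothari--Tudose). For $q<r$, one pads $Q$ to a rank-$(r-1)$ projection using vectors supported on $S$ and orthogonal to $\mathbf 1_S$, on which $\langle v,L_Hv\rangle\ge r\|v\|^2$. One then needs $\sum_{i\le r-1}\lambda_i(H)\le e(H)+\binom r2=r(r-1)+e_H(S,\bar S)$, which follows from the Grone--Merris inequality for $H$ at index $r-1$ and is supplied by Bai's theorem. The case $q\ge r$ follows by complementation.

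So your proposal still needs an actual proof of the split-graph case. That could be Bai's argument, or an independent proof of that single-index bound combined with the Kothari--Tudose padding argument. ``Falling back on Bai's original argument'' is a citation, not a proof.
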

(For $k=n$ both sides equal $2e(G)$, so the inequality is trivial.)
A related problem is Brouwer's Laplacian conjecture, which states that for
every graph $G$ of order $n$ and every $1\le k\le n$,
\begin{equation}\label{Brouwer}
\sum_{i=1}^k\lambda_i(G)\le e(G)+\binom{k+1}{2}.
\end{equation}
(Again $k=n$ is trivial.)
This conjecture, proposed by Brouwer~\cite{BrouwerHaemers}, attracted sustained
attention and was recently proved by Kothari and Tudose~\cite{Kothari2026}.
The central innovation of~\cite{Kothari2026} is a split-graph trace inequality
(Lemma~3.3 of that paper):
for a split graph $H$ with clique $K$ and independent set $S$, and any suitable
orthogonal projection $Q$, one has
$\operatorname{tr}(Q(L_H-rI))\le e_H(K,S)$.
Kothari and Tudose proved Brouwer's conjecture by applying this inequality
together with the Grone--Merris--Bai theorem restricted to split graphs.
They also proved the reverse implication: if Brouwer's conjecture holds for all
graphs, then the Grone--Merris--Bai theorem holds for all split graphs.
In this sense, the two statements are equivalent.

Recently, Cai, Chen, Yang and Zhang~\cite{Cai2026} characterized the equality
cases of Brouwer's conjecture, showing that~\eqref{Brouwer} is an equality if
and only if $G$ is a threshold graph with clique number $k+1$.
The detailed spectral analysis of threshold graph Laplacians plays a crucial
role in the present work.

Despite these advances, the question of exactly which pairs $(G,k)$ achieve
equality in the Grone--Merris inequality~\eqref{GM} has remained open.
In this paper we settle this question completely.
Our main result, Theorem~\ref{mainthm}, shows that equality holds if and only
if $G$ belongs to one of two explicitly described families, both obtained by
modifying a single terminal block of a threshold graph.
\begin{itemize}
\item \textbf{Type~I (lower terminal clique-block replacement):}
$G$ is obtained from a threshold graph by deleting arbitrary edges from the
first dominating block, provided the resulting graph $F$ satisfies a
minimum-degree and component condition relative to $k$.
\item \textbf{Type~II (upper terminal independent-block replacement):}
$G$ is obtained from a threshold graph, by adding
arbitrary edges inside the first isolated block, subject to analogous
constraints.
\end{itemize}

Our proof follows the structure of~\cite{Kothari2026}: we unwind each
inequality in the derivation of~\eqref{GM} via the split-graph trace
inequality, translate each slackness condition into a geometric constraint on
the projection $Q$, and solve these constraints simultaneously using the
spectral theory of threshold graphs developed in~\cite{Cai2026} and the
detailed analysis of the split-graph trace inequality we carry out in
Section~\ref{sec:split-trace}.

The paper is organized as follows.
Section~\ref{sec:prelim} collects notation and presents the two equivalent
definitions of threshold graphs that we shall need.
Section~\ref{sec:thg-spectrum} gives a self-contained description of the
Laplacian spectrum of a threshold graph.
Section~\ref{sec:split-trace} analyzes the equality conditions of the
split-graph trace inequality.
Section~\ref{sec:reduction} reduces the GM equality to five explicit
conditions.
Section~\ref{sec:main} states and proves the main theorem.

\section{Preliminaries}\label{sec:prelim}

\subsection{Notation}

Throughout, $G=(V,E)$ is a simple graph of order $n$.
We write $|V|=n$ and $|E|=e(G)$.
For a vertex $v$, $d_G(v)$ denotes its degree and $N_G(v)$ its neighbourhood.
For a subset $X\subseteq V$, $G[X]$ denotes the induced subgraph and
$e(G[X])$ the number of edges inside $X$.
For disjoint subsets $X,Y\subseteq V$, $e_G(X,Y)$ is the number of edges
with one endpoint in $X$ and the other in $Y$.

For a real number $\lambda$, let
\[
E_\lambda(L(G))=\{x\in\mathbb R^V: L(G)x=\lambda x\}
\]
denote the $\lambda$-eigenspace. For a subspace $L\subset \mathbb R^n$, an $n\times n$ real matrix $P$ is called an orthogonal projection onto $L$ if $P^2=P$, $P^T=P$, $ImP=L$, which is uniquely determined. 
For a real number $r>0$, set
\[
E_{>r}(L(G))=\bigoplus_{\lambda>r}E_\lambda(L(G)),\qquad
E_{<r}(L(G))=\Bigl(\bigoplus_{\lambda<r}E_\lambda(L(G))\Bigr)\cap\mathbf 1^\perp.
\]
%(In words, $E_{<r}$ consists of all eigenvectors with eigenvalue $<r$ that are orthogonal to $\mathbf 1$; this excludes the all-ones direction while retaining any other zero-eigenvalue directions.)
and write
\[
Q_{r}=P_{(r,\infty)}(L(G))
\]
for the orthogonal projection onto $E_{>r}(L(G))$.

For a subspace $U\subseteq\mathbb R^V$, $P_U$ denotes the orthogonal
projection onto $U$.
For a subset $X\subseteq V$, $I_X$ is the $|X|\times|X|$ identity matrix and
$J_X$ the all-ones matrix of the same size; both are extended to operators on
$\mathbb R^V$ by acting as zero outside $X$.

\subsection{Threshold graphs}

Threshold graphs were introduced by Chv\'atal and Hammer~\cite{ChvatalHammer1977}
and are the subject of the monograph~\cite{MahadevPeled}.
They admit many equivalent characterizations; we shall make use of the
following two.

\begin{definition}[Creation sequence]\label{def:creation}
A graph $H$ is a \emph{threshold graph} if it can be constructed from a
single vertex by repeatedly applying the following two operations:
\begin{itemize}
\item add a dominating vertex (adjacent to all existing vertices);
\item add an isolated vertex (adjacent to none of the existing vertices).
\end{itemize}
Grouping consecutive operations of the same type, every threshold graph
admits a unique \emph{creation sequence} of the form
\[
U_1,D_1,U_2,D_2,\ldots,U_m,D_m,
\]
where each $U_i$ is a block of consecutive dominating vertices and each $D_i$
is a block of consecutive isolated vertices.
Only $D_m$ may be empty; all other blocks are nonempty.
\end{definition}

With this convention,
\[
K:=U_1\cup U_2\cup\cdots\cup U_m
\]
is a maximum clique of $H$, and
\[
S:=D_1\cup D_2\cup\cdots\cup D_m
\]
is an independent set.
The clique number is $\kappa:=\sum_{i=1}^m|U_i|$.
The edge relations are completely determined by the blocks:
all pairs of distinct vertices in $K$ are adjacent;
no two vertices in $S$ are adjacent;
$D_i\sim U_j$ if and only if $j>i$.
Consequently,
\[
d_H(v)=\sum_{j=i+1}^m|U_j|\;\;(v\in D_i),\qquad
d_H(v)=\kappa-1+\sum_{j=1}^{i-1}|D_j|\;\;(v\in U_i).
\]

\begin{theorem}[Mahadev--Peled~\cite{MahadevPeled}]\label{thm:thg-equiv}
For a graph $G$, the following are equivalent:
\begin{enumerate}
\item[(i)] $G$ is a threshold graph (can be built by repeatedly adding
dominating or isolated vertices);
\item[(ii)] $G$ is a split graph whose vertex set can be partitioned into a
clique $K$ and an independent set $S$ such that the neighborhoods of the
vertices of $S$ into $K$ are nested, i.e.,~totally ordered by inclusion;
\item[(iii)] $G$ contains no induced $P_4$, $C_4$, or $2K_2$;
\item[(iv)] there exist nonnegative real numbers $w_v$ ($v\in V$) and a
threshold $t$ such that $uv\in E(G)$ if and only if $w_u+w_v>t$.
\end{enumerate}
\end{theorem}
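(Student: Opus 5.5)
We prove the cycle of implications (i)$\Rightarrow$(ii)$\Rightarrow$(iii)$\Rightarrow$(i), and show (i)$\Leftrightarrow$(iv) separately.

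\emph{(i)$\Rightarrow$(ii).} Take the creation sequence of Definition~\ref{def:creation} and put $K=U_1\cup\cdots\cup U_m$ and $S=D_1\cup\cdots\cup D_m$. The block description shows that $K$ is a clique and $S$ is independent. A vertex $v\in D_i$ has $N(v)=U_{i+1}\cup\cdots\cup U_m$. These sets decrease as $i$ increases, so the neighbourhoods of $S$ in $K$ are nested.

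\emph{(ii)$\Rightarrow$(iii).} Suppose $G$ is split with nested neighbourhoods, and let $F$ be an induced $P_4$, $C_4$ or $2K_2$. Since $G$ is split, $F$ is split. Now $C_4$ and $2K_2$ are not split, so $F\cong P_4=abcd$. Its only clique/independent partition is $\{b,c\}\cup\{a,d\}$, so $b,c\in K$ and $a,d\in S$. Then $N(a)\ni b$ but $N(a)\not\ni c$, while $N(d)\ni c$ but $N(d)\not\ni b$. Hence $N(a)\cap K$ and $N(d)\cap K$ are incomparable, a contradiction.

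\emph{(iii)$\Rightarrow$(i).} This is the main step. We induct on $n$ and show that every $\{P_4,C_4,2K_2\}$-free graph has an isolated vertex or a dominating vertex. Removing that vertex keeps the graph in the class, and the induction gives a creation sequence to which we append the removed vertex. Let $v$ have maximum degree and suppose $v$ is not dominating. Pick $u\notin N[v]$. If $u$ has a neighbour $w$, there are two cases.
\begin{itemize}
\item If $w\notin N[v]$, then $\deg v\ge 1$ forces a $2K_2$ $\{vx,uw\}$ for any $x\in N(v)$, unless $x$ is adjacent to $u$ or $w$.
\item If $w\in N(v)$, then $d(w)\le d(v)$ gives some $x\in N(v)\setminus N[w]$, and then $x v w u$ is an induced $P_4$ or yields a $C_4$.
\end{itemize}
We follow the standard argument: every $x\in N(v)$ must be adjacent to $w$ (to avoid a $P_4$/$C_4$), and then $N(w)\supseteq N(v)\cup\{u\}$, contradicting the maximality of $d(v)$. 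The first case is handled the same way, with degree counting. So every non-neighbour of $v$ is isolated, and in particular an isolated vertex exists. The delicate part is this case analysis, and I expect it to be the main obstacle.

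\emph{(i)$\Rightarrow$(iv).} Give the vertex added at step $j$ the weight $2^j$ if it is dominating and weight $0$ if it is isolated, and set $t=2^{0}-\tfrac12$. Adjust the weights so that a later dominating vertex outweighs the sum of all earlier weights. Then $w_u+w_v>t$ holds exactly when the later-added of $u,v$ was dominating.

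\emph{(iv)$\Rightarrow$(i).} Order the vertices by weight. Let $v_{\max}$ have the largest weight and $v_{\min}$ the smallest. If $w_{\max}+w_{\min}>t$, then $v_{\max}$ is dominating. Otherwise $w_{\min}+w_u\le t$ for all $u$, so $v_{\min}$ is isolated. Delete that vertex and induct.
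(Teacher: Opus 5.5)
The paper does not prove this theorem; it only cites it from Mahadev--Peled. So the question is whether your argument stands on its own. Your steps (i)$\Rightarrow$(ii), (ii)$\Rightarrow$(iii) and (iv)$\Rightarrow$(i) are correct as written.

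\textbf{The step that fails is (i)$\Rightarrow$(iv).} You give weight $2^j$ to a vertex added as dominating at step $j\ge 1$, weight $0$ to isolated vertices, and take $t=\tfrac12$. Then every dominating vertex already has weight exceeding $t$, so it would be joined to every \emph{later} isolated vertex too.

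Concretely, build $v_0$, then $v_1$ dominating, then $v_2$ isolated. Then $w_{v_1}+w_{v_2}=2>t$, but $v_1v_2\notin E$.

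Your ``adjustment'' (making later dominating vertices outweigh earlier weights) does not address this. In fact no scheme that gives all isolated vertices the same weight can work. Take the sequence $v_0$, $v_1$ isolated, $v_2$ dominating, $v_3$ isolated. The adjacencies force $w_{v_1}+w_{v_2}>t\ge w_{v_2}+w_{v_3}$, hence $w_{v_1}>w_{v_3}$.

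So isolated vertices need \emph{decreasing} weights and dominating vertices increasing ones, placed symmetrically about $t/2$. One working choice, with creation order $v_1,\dots,v_n$:
\begin{itemize}
\item put $w_{v_j}=n+j$ if $v_j$ was added as dominating;
\item put $w_{v_j}=n-j$ otherwise (the initial vertex can be treated either way);
\item take $t=2n$.
\end{itemize}
For $i<j$, $w_{v_i}+w_{v_j}-2n$ equals $i+j$ or $j-i$ (both positive) when $v_j$ is dominating. It equals $i-j$ or $-i-j$ (both negative) when $v_j$ is isolated. This is exactly the adjacency rule, and all weights are nonnegative.

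\textbf{A smaller point in (iii)$\Rightarrow$(i).} The case $w\notin N[v]$ is only gestured at. From the $2K_2$ you get that every $x\in N(v)$ is adjacent to $u$ or $w$. If $x$ were adjacent to exactly one of them, then $v\,x\,u\,w$ or $v\,x\,w\,u$ would be an induced $P_4$. So $x$ is adjacent to both, and $N(w)\supseteq N(v)\cup\{u\}$ gives $d(w)>d(v)$, contradicting maximality.

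In the case $w\in N(v)$, your bullet is already complete; the counting behind it is $N(w)\supseteq (N(v)\setminus\{w\})\cup\{v,u\}$. Your later ``standard argument'' sentence, with $N(w)\supseteq N(v)\cup\{u\}$, is really the other case, so separate the two cases cleanly.
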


In this paper we shall primarily use the creation sequence viewpoint
(Definition~\ref{def:creation}), as it gives direct access to the block
structure needed for the spectral analysis.
The nested-neighborhood condition~(ii) is used implicitly whenever we refer
to a threshold graph as a split graph with a particular $K$--$S$ adjacency
pattern.

\subsection{Brouwer's conjecture and its equality cases}

Brouwer's Laplacian conjecture~\eqref{Brouwer} was proved by Kothari and
Tudose~\cite{Kothari2026}.
The equality cases were recently determined by Cai, Chen, Yang and
Zhang~\cite{Cai2026}:
\begin{theorem}[~\cite{Cai2026}]\label{thm:brouwer-eq}
Let \(G\) be a graph.  For $1\leq k\leq |V(G)|$,
\begin{equation}\label{equforBC}
\sum_{i=1}^k\lambda_i(G) = e(G) + \binom{k+1}{2} %, \quad \text{for}\  k=1,\ldots,n-1.
\end{equation}
if and only if \(G\) is a threshold graph with clique number
\(k+1\).
\end{theorem}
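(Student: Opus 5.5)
Since this theorem is quoted from~\cite{Cai2026}, I would prove it by following the Kothari--Tudose argument for~\eqref{Brouwer} and recording where each inequality can be tight.

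\emph{Sufficiency.} Let $G$ be a threshold graph with clique number $k+1$. Threshold graphs satisfy the Grone--Merris bound with equality for every $k$, because $\lambda(G)=d^*(G)$; this follows from the creation sequence by induction. Adding a dominating vertex to a graph on $n-1$ vertices sends its spectrum $\{\mu_i\}$ to $\{\mu_i+1\}\cup\{n,0\}$, and adding an isolated vertex just appends a $0$. The conjugate degree sequence transforms in exactly the same way. So $\sum_{i\le k}\lambda_i=\sum_{i\le k}d_i^*$, and it remains to check that
\[
\sum_{i\le k} d_i^* = e(G)+\binom{k+1}{2}\quad\text{when } \kappa=k+1.
\]
Take $r=k+1$ in the formula $\min_r\bigl(rk+\sum_{i>r}d_i\bigr)$. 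Then $rk=2\binom{k+1}{2}$. Since $d_i\ge k$ exactly on the clique vertices, $\sum_{i>r}d_i=e_G(K,S)$. Finally $e(G)=\binom{k+1}{2}+e_G(K,S)$, which gives the identity.

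\emph{Necessity.} Write $\sum_{i\le k}\lambda_i=\operatorname{tr}(QL)$ with $Q$ the projection onto the top $k$ eigenvectors. Kothari--Tudose pass from $G$ to a split graph $H$ on a suitable $K$ with $|K|=r$: they fill in the clique on $K$, delete the edges inside $S$, and use $\operatorname{tr}(Q(L_H-rI))\le e_H(K,S)$. Each such step is an inequality, so equality forces every one of them to be tight. First I would show that the edges inside $S$ contribute nothing, i.e.\ $S$ is independent. Second, adding the missing edges of $K$ must not change $\operatorname{tr}(QL)$, and I would argue $K$ was already a clique. Third, Grone--Merris equality must hold for $H$, which gives nested neighbourhoods and so forces $G$ to be threshold. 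Fourth, optimality of $r$ and the binomial count force $|K|=k+1$.

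\emph{Main obstacle.} The hard part is the first two steps: turning tightness of trace inequalities into structure. Trace equality means the range of $Q$ must lie in specific spectral subspaces of the modified Laplacian. I would first try to show that each non-edge of $K$, or each edge of $S$, yields a vector $e_u-e_v$ whose projection forces a strict gap.
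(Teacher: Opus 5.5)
The paper only quotes this theorem, so your sketch has to stand on its own. The sufficiency half essentially does. The necessity half has a genuine gap: the inequality chain you invoke does not end at $e(G)+\binom{k+1}{2}$.

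\textbf{The chain is the Grone--Merris chain.} The chain you attribute to Kothari--Tudose is: top-$k$ projection $Q$, fill in the clique on $K$, delete the edges of $G[S]$, apply $\operatorname{tr}(Q(L_H-rI))\le e_H(K,S)$. This is exactly the chain recalled in Section~\ref{sec:reduction}. It ends at $rk+e_G(K,S)+2e(G[S])=rk+\sum_{v\in S}d_G(v)\ge\sum_{i\le k}d_i^*$, which is the Grone--Merris bound, not Brouwer's bound. Write $\bar e(G[K])$ for the number of non-adjacent pairs in $K$. The excess of this endpoint over $e(G)+\binom{k+1}{2}$ is $\bar e(G[K])+e(G[S])-\binom{r-k}{2}$, and this can be positive for every choice of $K$. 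For $G=mK_2$ with $m\ge 2$ and $k=1$, the chain gives at best $d_1^*=2m$, while Brouwer's bound is $m+1$.

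So equality in~\eqref{equforBC} forces no step of this chain to be tight, and ``each step is an inequality, so each must be tight'' has nothing to act on. Even genuine tightness of this chain would not give your first two steps. By Theorem~\ref{mainthm}, the Type~I and Type~II pairs $(G,k)$ make every step tight, yet they have non-edges inside $K$ (in $U_1$) or edges inside $S$ (in $B_1$).

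\textbf{Your third step is not available either.} Grone--Merris equality by itself does not give nested neighbourhoods. Take $P_4\vee K_1$ with a pendant vertex attached to the apex, and $k=1$. The apex dominates, so $\lambda_1=6=d_1^*$, yet this split graph contains an induced $P_4$. When $K$ is already a clique and $S$ is independent, nestedness does follow from Theorem~\ref{mainthm}. However, the paper obtains it through Theorem~\ref{thm:split-trace}, whose Case~1 invokes the very statement you are proving at~\eqref{di3}, and Case~2 reduces to Case~1. Using it here is circular.

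\textbf{What is missing.} You need an inequality chain that genuinely terminates at $e(G)+\binom{k+1}{2}$ for an arbitrary $G$, that is, the step passing from a general graph to the split-graph setting. You also need an equality analysis of that chain which does not presuppose the result. For split graphs with clique number $\omega$, such a chain is immediate from Bai's theorem:
\[
\sum_{i\le k}\lambda_i\le\sum_{i\le k}d_i^*\le\omega k+e(K,S)=e(G)+\binom{k+1}{2}-\binom{\omega-k}{2}.
\]
This is where your fourth step belongs. Equality forces $\omega\in\{k,k+1\}$. Once $G$ is known to be threshold, $\omega=k$ is excluded, because then the vertices of $U_1$ have degree $k-1$ and $d_k^*<k$.

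\textbf{Two small repairs to sufficiency.}
\begin{enumerate}
\item The join recursion should read $\{\mu_1+1,\dots,\mu_{n-2}+1,n,0\}$; the trivial zero eigenvalue is not shifted.
\item It is not true that $d_v\ge k$ holds \emph{exactly} on $K$. If $|U_1|=1$, the vertices of $D_1$ also have degree $k$, as with the leaves of a star when $k=1$. What you need, and do have, is $d\ge k$ on $K$ and $d\le\kappa-|U_1|\le k$ on $S$. This gives $d_{k+1}^*\le k+1\le d_k^*$, so $r=k+1$ is a genuine minimizer. Without that you only get $\sum_{i\le k}d_i^*\le e(G)+\binom{k+1}{2}$, which is the wrong direction.
\end{enumerate}
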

This result pins down the structure of the split graph $H$ that appears in the
split-graph trace inequality when the considered inequality is tight.

\section{Laplacian Spectrum of Threshold Graphs}\label{sec:thg-spectrum}

In this section we give a self-contained description of all Laplacian
eigenvectors and eigenvalues of a threshold graph $H$ whose creation
sequence is
\[
U_1,D_1,U_2,D_2,\ldots,U_m,D_m\qquad(\text{only }D_m\text{ may be empty}).
\]
Set $\kappa=\sum_{i=1}^m|U_i|$.

\begin{proposition}[Laplacian eigenvectors of a threshold graph]\label{prop:thg-spec}
The Laplacian eigenvalues and eigenvectors of $H$ are as follows.

\medskip\noindent\textbf{Type~I: difference vectors inside $D_i$-blocks.}
For each $i$ with $|D_i|>0$, every vector $x$ satisfying
$\operatorname{supp}(x)\subseteq D_i$ and $\sum_{v\in D_i}x_v=0$
is an eigenvector with eigenvalue
$\sum_{j=i+1}^m|U_j|$.
This gives $|D_i|-1$ linearly independent eigenvectors.

\medskip\noindent\textbf{Type~II: difference vectors inside $U_i$-blocks.}
For each $i$, every vector $x$ satisfying
$\operatorname{supp}(x)\subseteq U_i$ and $\sum_{v\in U_i}x_v=0$
is an eigenvector with eigenvalue
$\kappa+\sum_{j=1}^{i-1}|D_j|$.
This gives $|U_i|-1$ linearly independent eigenvectors.

\medskip\noindent\textbf{Type~III: block-constant eigenvectors of $D$-type.}
For each $h=1,\ldots,m$ with $|D_h|>0$, define
$\psi^{D,h}$ by setting it equal to $1$ on $U_1,\ldots,U_h$ and
$D_1,\ldots,D_{h-1}$,
equal to
$-\frac{\sum_{i=1}^h|U_i|+\sum_{i=1}^{h-1}|D_i|}{|D_h|}$
on $D_h$, and $0$ on all remaining blocks.
Then
$L(H)\,\psi^{D,h}=(\sum_{j=h+1}^m|U_j|)\,\psi^{D,h}$.

\medskip\noindent\textbf{Type~IV: block-constant eigenvectors of $U$-type.}
For each $p=2,\ldots,m$, define
$\psi^{U,p}$ by setting it equal to $1$ on $U_1,\ldots,U_{p-1}$ and
$D_1,\ldots,D_{p-1}$,
equal to
$-\frac{\sum_{i=1}^{p-1}|U_i|+\sum_{i=1}^{p-1}|D_i|}{|U_p|}$
on $U_p$, and $0$ on all remaining blocks.
Then
$L(H)\,\psi^{U,p}=(\kappa+\sum_{j=1}^{p-1}|D_j|)\,\psi^{U,p}$.

Finally, $\mathbf 1$ is an eigenvector with eigenvalue $0$.
\end{proposition}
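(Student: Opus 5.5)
We verify each eigen-equation directly, using $(L(H)x)_v=d_H(v)x_v-\sum_{u\sim v}x_u$ together with the block adjacency rules: $K$ is a clique, $S$ is independent, and $D_i\sim U_j$ iff $j>i$. We then check that the four families, together with $\mathbf 1$, contain $n$ linearly independent vectors, so that they account for the whole spectrum.

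\textbf{Types I and II.} Let $x$ be supported in a single block $B$ with $\sum_{v\in B}x_v=0$. All vertices of $B$ have the same neighbourhood outside $B$. Hence for $u\notin B$, $(L(H)x)_u=-\sum_{v\in B,\,v\sim u}x_v$, which is $0$ or $-\sum_{v\in B}x_v=0$.

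For $v\in B$ there are two cases. If $B=D_i$, then $B$ is independent and $(Lx)_v=d_H(v)x_v=\bigl(\sum_{j>i}|U_j|\bigr)x_v$. If $B=U_i$, then $B$ is a clique and $(Lx)_v=d_H(v)x_v-\sum_{u\in U_i\setminus v}x_u=(d_H(v)+1)x_v$. Substituting $d_H(v)=\kappa-1+\sum_{j<i}|D_j|$ gives the stated eigenvalue.

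\textbf{Types III and IV.} These vectors are constant on blocks, so it suffices to compute $(L\psi)_v$ for one representative $v$ in each block. Each vector sums to zero by the choice of the negative value. The unifying observation is that $\psi^{D,h}$ is the indicator of the vertex set $W$ built before $D_h$, minus a multiple of $\mathbf 1_{D_h}$, arranged so that $\psi^{D,h}\perp\mathbf 1$. In the creation order, every vertex created after $D_h$ is either dominating (adjacent to all of $W\cup D_h$) or isolated. So for a later vertex $u$, $(L\psi)_u$ equals $0$ or $-\sum\psi=0$.

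For $v\in W\cup D_h$, the later-added neighbours contribute exactly $(\text{number of later dominating vertices})\cdot\psi_v=\bigl(\sum_{j>h}|U_j|\bigr)\psi_v$. It then remains to show that $L(H[W\cup D_h])\psi=0$. This holds because, in $H[W\cup D_h]$, the block $D_h$ was just added as isolated vertices: $\psi$ is constant on the component $W$ and constant on each isolated vertex of $D_h$.

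Type IV is handled the same way. Within $H[W\cup U_p]$, where $W=U_1\cup\dots\cup D_{p-1}$, the vertices of $U_p$ are dominating. For a vector constant on $W$ (value $a$) and on $U_p$ (value $b$) with total sum zero, a dominating-join computation gives eigenvalue $|W|+|U_p|$. Adding the later-vertex contribution $\sum_{j>p}|U_j|$ yields $\sum_{i\le p-1}(|U_i|+|D_i|)+|U_p|+\sum_{j>p}|U_j|=\kappa+\sum_{j<p}|D_j|$, as claimed. One should also check the isolated-vertex term for $D_{p-1}$ inside $W$, which holds because $\psi$ is constant on $W$.

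\textbf{Counting.} The count is $\sum(|D_i|-1)+\sum(|U_i|-1)+\#\{h:|D_h|>0\}+(m-1)+1=n$. Linear independence follows from triangularity with respect to blocks. Vectors of Types I and II are orthogonal to all block-constant vectors. The block-constant vectors $\psi^{U,2},\psi^{D,1},\dots$, taken in creation order, each introduce a new block on which all earlier vectors vanish.

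The only delicate point is the bookkeeping of which later blocks are adjacent to a given vertex. The "later vertices see a zero-sum vector" argument removes this cleanly, and we expect it to be the main step to write carefully.
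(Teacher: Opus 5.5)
Your proof is correct. For Types~I and~II you argue exactly as the paper does; for Types~III and~IV your route is different.

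The paper writes a general block-constant vector, with value $\alpha_i$ on $D_i$ and $\beta_i$ on $U_i$. It derives $(Lx)_v=\sum_{j>i}|U_j|(\alpha_i-\beta_j)$ for $v\in D_i$ and the analogous formula for $v\in U_i$. It then checks each block position relative to $h$ or $p$ by hand.

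You instead split $L(H)\psi$ along the creation order. A vertex created after the negative block is adjacent to all or none of the support of $\psi$, so it sees a zero sum. A vertex in the support sees $L$ of the prefix graph, plus $\psi_v$ times the number of later dominating vertices. This reduces Type~III to the fact that a Laplacian annihilates vectors constant on components. It reduces Type~IV to the two-block join computation, which gives eigenvalue $|W|+|U_p|$.

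Your version explains where the eigenvalues come from and avoids the case bookkeeping. The paper's version is more mechanical but fully explicit. You also prove, via the dimension count and the orthogonality and triangularity argument, that these vectors together with $\mathbf 1$ form a basis of $\mathbb R^V$; the paper only asserts this in its last sentence.

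Two small repairs are needed in that count.
\begin{enumerate}
\item The sum $\sum(|D_i|-1)$ must run only over nonempty $D_i$; otherwise an empty $D_m$ contributes $-1$.
\item In the triangularity argument, $\mathbf 1$ does not vanish on any block, so it cannot sit in the triangular list. List the block-constant vectors in creation order, $\psi^{D,1},\psi^{U,2},\psi^{D,2},\dots$, not $\psi^{U,2},\psi^{D,1}$ as written. Then adjoin $\mathbf 1$ separately, using that every $\psi$ has zero sum and is therefore orthogonal to $\mathbf 1$.
\end{enumerate}
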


\begin{proof}
We verify each type using $(Lx)_v=\sum_{u\sim v}(x_v-x_u)$.

\medskip\noindent\textit{Type~I.}
Let $x$ be supported on $D_i$ with zero sum.
If $v\in D_i$, then $v$ is adjacent exactly to $U_{i+1}\cup\cdots\cup U_m$,
on which $x$ vanishes, so $(Lx)_v=(\sum_{j=i+1}^m|U_j|)\,x_v$.
If $v\notin D_i$, then $v$ is adjacent either to all vertices of $D_i$ or to
none, giving a contribution of $-\sum_{u\in D_i}x_u=0$ or $0$ respectively.

\medskip\noindent\textit{Type~II.}
Let $x$ be supported on $U_i$ with zero sum.
If $v\in U_i$, then $(Lx)_v=d_H(v)x_v-\sum_{u\in U_i\setminus\{v\}}x_u$.
Since $\sum_{u\in U_i}x_u=0$, the sum over $U_i\setminus\{v\}$ equals $-x_v$,
so $(Lx)_v=(d_H(v)+1)x_v$.
For $v\in U_i$, $d_H(v)=\kappa-1+\sum_{j=1}^{i-1}|D_j|$, hence the eigenvalue
is $\kappa+\sum_{j=1}^{i-1}|D_j|$.
If $v\notin U_i$, the contribution vanishes as in Type~I.

\medskip\noindent\textit{Types~III and IV.}
Let $x$ be constant on each block; write $x(D_i)=\alpha_i$, $x(U_i)=\beta_i$.
For $v\in D_i$,
$(Lx)_v=\sum_{j=i+1}^m|U_j|(\alpha_i-\beta_j)$.
For $v\in U_i$,
$(Lx)_v=\sum_{j\neq i}|U_j|(\beta_i-\beta_j)+\sum_{j=1}^{i-1}|D_j|(\beta_i-\alpha_j)$.

For Type~III, fix $h$ with $|D_h|>0$.
Set $\beta_i=1$ for $i\le h$, $\alpha_i=1$ for $i<h$,
$\alpha_h=-\frac{\sum_{i=1}^h|U_i|+\sum_{i=1}^{h-1}|D_i|}{|D_h|}$,
and $\alpha_i=\beta_i=0$ for $i>h$.

If $v\in D_i$ with $i<h$:
$(Lx)_v=\sum_{j=h+1}^m|U_j|$, matching the claimed eigenvalue.
If $v\in D_h$: $(Lx)_v=(\sum_{j=h+1}^m|U_j|)\alpha_h$.
If $v\in D_i$ ($i>h$): all neighboring $U$-blocks have value $0$.
If $v\in U_i$ with $i\le h$:
$(Lx)_v=\sum_{j=h+1}^m|U_j|$.
If $v\in U_i$ with $i>h$: $\beta_i=0$, and
$(Lx)_v=-\sum_{i=1}^h|U_i|-\sum_{i=1}^{h-1}|D_i|-|D_h|\alpha_h=0$.
Thus $L\psi^{D,h}=(\sum_{j=h+1}^m|U_j|)\,\psi^{D,h}$.

For Type~IV, fix $p\in\{2,\ldots,m\}$.
Set $\beta_i=1$, $\alpha_i=1$ for $i<p$,
$\beta_p=-\frac{\sum_{i=1}^{p-1}|U_i|+\sum_{i=1}^{p-1}|D_i|}{|U_p|}$,
$\alpha_p=0$, and $\alpha_i=\beta_i=0$ for $i>p$.

For $v\in D_i$ ($i<p$):
$(Lx)_v=|U_p|(1-\beta_p)+\sum_{j=p+1}^m|U_j|
=\kappa+\sum_{j=1}^{p-1}|D_j|$.
For $v\in U_i$ ($i<p$): the same computation applies.
For $v\in U_p$:
\[
(Lx)_v=(\sum_{i=1}^{p-1}|U_i|)(\beta_p-1)
+(\sum_{j=p+1}^m|U_j|)\beta_p
+(\sum_{j=1}^{p-1}|D_j|)(\beta_p-1)
=(\kappa+\sum_{j=1}^{p-1}|D_j|)\,\beta_p.
\]
For $v\in U_i$ ($i>p$): all contributions cancel by the definition of $\beta_p$.

These eigenvectors, together with $\mathbf 1$, span $\mathbb R^V$, giving the
full spectral decomposition.
\end{proof}

\begin{remark}\label{rem:spectral-facts}
The following consequences of Proposition~\ref{prop:thg-spec} are essential
for the main proof.
Let $r_0=\kappa,\ K=\bigcup_{i=1}^mU_i,\ S=\bigcup_{i=1}^mD_i$.

\begin{enumerate}
\item[(a)] The eigenspace $E_{r_0}(L_H)=Z_1\subset \mathcal K$, where
$\mathcal K=\{x:x|_S=0,\;\langle x,\mathbf 1_K\rangle=0\}$.

\item[(b)] The eigenvalues larger than $r_0$ are the $U$-type eigenvalues:
$E_{>r_0}(L_H)=\bigoplus_{p=2}^m\bigl(Z_p\oplus\operatorname{span}\{\psi^{U,p}\}\bigr)$,
where $Z_p=\{x:\operatorname{supp}(x)\subseteq U_p,\;\sum_{u\in U_p}x_u=0\}$.
In particular,
$\dim E_{>r_0}(L_H)=\sum_{p=2}^m|U_p|=r_0-|U_1|$.

\item[(c)] The eigenvalues smaller than $r_0$ are the $D$-type eigenvalues:
$E_{<r_0}(L_H)=\bigoplus_{i=1,|D_i|>0}^m\bigl(Y_i\oplus\operatorname{span}\{\psi^{D,i}\}\bigr)$,
where $Y_i=\{x:\operatorname{supp}(x)\subseteq D_i,\;\sum_{v\in D_i}x_v=0\}$.
Hence $\dim E_{<r_0}(L_H)=\sum_{i=1}^m|D_i|=n-r_0$.

\item[(d)] The projection onto $E_{>r_0}(L_H)$ admits the explicit formula
\[
P_{E_{>r_0}(L_H)}=\sum_{p=2}^m\Bigl(I_{U_p}-\frac1{|U_p|}J_{U_p}\Bigr)
+\sum_{p=2}^m\frac{\psi^{U,p}(\psi^{U,p})^T}{\|\psi^{U,p}\|^2}.
\]
\end{enumerate}
\end{remark}

\section{Equality in the Split-Graph Trace Inequality}\label{sec:split-trace}

We now give a detailed analysis of the equality conditions of
Lemma~3.3 from~\cite{Kothari2026}.

\begin{theorem}[Lemma~3.3 of~\cite{Kothari2026}]\label{thm:split-trace}
Let $H$ be a split graph of order $n$ with clique $K$ ($|K|=r_0$) and
independent set $S$.
Let $Q$ be an orthogonal projection on $\mathbb R^{V(H)}$ with
$Q\mathbf 1=0$, and set $q=\operatorname{rank}Q$.
Then
\[
\operatorname{tr}\bigl(Q(L_H-r_0I_n)\bigr)\le e_H(K,S),
\]
and equality holds if and only if exactly one of the following occurs.

\medskip\noindent\textbf{Case~1: $q<r_0$.}
$H$ is a threshold graph with clique number $r_0$, whose creation sequence is
$U_1,D_1,\ldots,U_m,D_m$ (only $D_m$ possibly empty), with
$K=U_1\cup\cdots\cup U_m$.
Moreover,
\[
Q=P_{E_{>r_0}(L_H)}+P_W
\]
for some subspace $W\le Z_1$, where
$Z_1=\{x:\operatorname{supp}(x)\subseteq U_1,\;\sum_{u\in U_1}x_u=0\}$.

\medskip\noindent\textbf{Case~2: $q\ge r_0$.}
$\overline H$ (the complement of $H$) is a threshold graph with clique
number $n-r_0$ whose clique side is $S$.
Equivalently, there exist partitions
$S=B_1\cup\cdots\cup B_\ell$, $K=A_1\cup\cdots\cup A_\ell$
(only $A_\ell$ possibly empty) such that the cross-edges are
$A_i\sim B_j\iff j\le i$.
Let $\widetilde Z_1=\{x:\operatorname{supp}(x)\subseteq B_1,\;
\sum_{v\in B_1}x_v=0\}$.
Then
\[
Q=I_n-P_{\mathbf 1}-P_{E_{<r_0}(L_H)}-P_{\widetilde W}
\]
for some subspace $\widetilde W\le\widetilde Z_1$.
\end{theorem}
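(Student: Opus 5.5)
The plan is to first reduce the trace inequality to a purely spectral statement via Ky Fan's principle, and then deduce both the inequality and its equality cases from Brouwer's inequality and Theorem~\ref{thm:brouwer-eq}, applied to $H$ and to $\overline H$.

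\textbf{Step 1 (reduction to eigenvalues).} Since $Q\mathbf 1=0$ and $\mathbf 1$ is an eigenvector of $L_H$, the projection $Q$ lives on $\mathbf 1^\perp$, which is invariant under $L_H-r_0I$. The eigenvalues of $L_H-r_0I$ restricted to $\mathbf 1^\perp$ are $\lambda_1-r_0\ge\cdots\ge\lambda_{n-1}-r_0$. Ky Fan's maximum principle gives
\[
\operatorname{tr}\bigl(Q(L_H-r_0I)\bigr)\le f(q):=\sum_{i=1}^{q}(\lambda_i-r_0)\le \max_{k} f(k)=\sum_{\lambda_i>r_0}(\lambda_i-r_0).
\]
Equality in the first step holds iff the range of $Q$ is spanned by eigenvectors belonging to the top $q$ eigenvalues, ties being chosen arbitrarily inside the eigenspace at the cut. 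Equality in the second step holds iff $\lambda_{q}\ge r_0\ge\lambda_{q+1}$.

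\textbf{Step 2 (locating the maximum).} Because $H$ contains $K_{r_0}$ on $K$, we have $L_H\succeq L(K_{r_0}\cup \overline{K_{n-r_0}})$. Monotonicity of eigenvalues then gives $\lambda_{r_0-1}(H)\ge r_0$. Dually, $\overline H$ contains a clique on $S$, and $L_{\overline H}=nI-J-L_H$ on $\mathbf 1^\perp$. This gives $\lambda_{n-r_0-1}(\overline H)\ge n-r_0$, i.e.\ $\lambda_{r_0+1}(H)\le r_0$. Hence $\max_k f(k)=\max\{f(r_0-1),f(r_0)\}$, according to the sign of $\lambda_{r_0}-r_0$.

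Now use $e(H)=\binom{r_0}{2}+e_H(K,S)$. Brouwer's inequality~\eqref{Brouwer} for $H$ with $k=r_0-1$ reads exactly
\[
f(r_0-1)\le e_H(K,S).
\]
Next, $f(r_0)$ can be rewritten through the spectrum of $\overline H$: the eigenvalues of $L_{\overline H}$ on $\mathbf 1^\perp$ are $n-\lambda_i(H)$. Counting $e(\overline H)=\binom{n-r_0}{2}+e_{\overline H}(K,S)$ with $e_{\overline H}(K,S)=r_0(n-r_0)-e_H(K,S)$, Brouwer for $\overline H$ with $k=n-r_0-1$ should become exactly $f(r_0)\le e_H(K,S)$. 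I would verify this bookkeeping carefully, using $\sum_{i<n}\lambda_i(H)=2e(H)$. This proves the inequality.

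\textbf{Step 3 (equality).} Suppose equality holds.

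If $\lambda_{r_0}(H)<r_0$, the maximum is $f(r_0-1)$, so Brouwer is tight for $H$ at $k=r_0-1$. Theorem~\ref{thm:brouwer-eq} then says $H$ is threshold with clique number $r_0$. One must also check that $K$ is the clique side, i.e.\ $K=U_1\cup\cdots\cup U_m$; this holds since a clique of size $r_0$ whose complement is independent must be the maximum clique in the creation-sequence normal form, up to the usual ambiguity of one vertex, which the convention that only $D_m$ may be empty removes. If $\lambda_{r_0}>r_0$, the symmetric argument makes $\overline H$ threshold with clique side $S$. If $\lambda_{r_0}=r_0$, both conclusions hold.

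Then Remark~\ref{rem:spectral-facts}(a)--(c) identifies $E_{>r_0}=\bigoplus_{p\ge2}(Z_p\oplus\operatorname{span}\psi^{U,p})$, of dimension $r_0-|U_1|$, and $E_{r_0}=Z_1$. Step 1 forces $E_{>r_0}\le\operatorname{Im}Q\le E_{\ge r_0}\cap\mathbf 1^\perp$, i.e.\ $Q=P_{E_{>r_0}}+P_W$ with $W\le Z_1$. This gives Case~1 when $q<r_0$. When $q\ge r_0$ we have $\dim E_{\ge r_0}\cap\mathbf 1^\perp\le r_0-1<q$ in the $H$-threshold picture, so we must be in the complementary picture. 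The same analysis applied to $I-P_{\mathbf 1}-Q$ and $\overline H$ yields $Q=I-P_{\mathbf 1}-P_{E_{<r_0}}-P_{\widetilde W}$, giving Case~2. Conversely, for these $Q$ the chain in Steps 1--2 is tight by Proposition~\ref{prop:thg-spec}.

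\textbf{Main obstacle.} The delicate part is Step~3: deciding which of the two pictures occurs as a function of $q$ rather than of $\lambda_{r_0}$, and handling the boundary case $\lambda_{r_0}=r_0$. I also need to make sure the identification $E_{r_0}=Z_1$ from Remark~\ref{rem:spectral-facts}(a) is valid, i.e.\ that no $D$-type eigenvalue equals $r_0$. Remark~\ref{rem:spectral-facts}(c) guarantees this, since $D$-type eigenvalues are at most $r_0-|U_1|<r_0$.
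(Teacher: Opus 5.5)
Your proof is correct, but it is organized differently from the paper's.

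\textbf{The paper's route.} For $q<r_0$ the paper pads $\operatorname{Im}Q$ with an $(r_0-1-q)$-dimensional subspace $L\subseteq\mathcal K\cap(\operatorname{Im}Q)^\perp$ of zero-sum vectors supported on $K$. This makes $Q+Q_L$ have rank exactly $r_0-1$. It then chains Ky Fan for $Q+Q_L$, the clique bound $\langle v,L_Hv\rangle\ge r_0\|v\|^2$ on $\mathcal K$, and Brouwer at $k=r_0-1$. The case $q\ge r_0$ is handled by passing to $\overline H$ and $\overline Q=I-P_{\mathbf 1}-Q$, whose rank is below $n-r_0$.

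\textbf{Your route.} You reduce everything to the scalar inequality $\sum_i(\lambda_i-r_0)_+\le e_H(K,S)$. Weyl monotonicity (the clique on $K$ in $H$, and the clique on $S$ in $\overline H$) confines the maximizer of $f$ to $\{r_0-1,r_0\}$, and you apply Brouwer to $H$ or to $\overline H$ directly. Your bookkeeping for $f(r_0)$ does reduce exactly to Brouwer for $\overline H$ at $k=n-r_0-1$.

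\textbf{What each buys.} In your route $Q$ enters only through the Ky Fan equality condition, so $E_{>r_0}\le\operatorname{Im}Q\le E_{\ge r_0}\cap\mathbf 1^\perp$ comes out in one step in both cases. You also get for free that equality forces $\lambda_{r_0}\neq r_0$, since the two threshold pictures give $\dim E_{>r_0}(L_H)=r_0-|U_1|$ and $r_0$ respectively. The paper's padding gives the bound $f(r_0-1)$ for $q<r_0$ without needing the second estimate $\lambda_{r_0+1}\le r_0$.

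\textbf{Your ``main obstacle''.} It disappears if you split by $q$ from the start. The function $f$ is nondecreasing on $\{0,\dots,r_0-1\}$ and nonincreasing on $\{r_0,\dots,n-1\}$. So $q<r_0$ gives $\operatorname{tr}(Q(L_H-r_0I))\le f(q)\le f(r_0-1)$, and $q\ge r_0$ gives the bound $f(r_0)$. Equivalently, the Step~1 equality condition $\lambda_{q+1}\le r_0$ yields $\lambda_{r_0}\le r_0$ whenever $q<r_0$. That is the step left implicit in ``this gives Case~1 when $q<r_0$''.

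\textbf{One small correction.} The convention that only $D_m$ may be empty does not remove the ambiguity in the clique side. When $|U_1|=1$, the vertex of $U_1$ is a twin of each vertex of $D_1$. What you actually need is that one may relabel the creation sequence so that $K=U_1\cup\cdots\cup U_m$, and this is true.
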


\begin{proof}
We analyze the two cases separately.

\medskip\noindent\textit{Case $q<r_0$.}
Assume $\operatorname{tr}(Q(L_H-r_0I_n))=e_H(K,S)$.
Define $\mathcal K=\{x:x|_S=0,\;\langle x,\mathbf 1_K\rangle=0\}$
($\dim\mathcal K=r_0-1$).
Since $\operatorname{Im}Q$ has dimension $q$ and is contained in
$\mathbb R^V$, we have
\[
\dim\bigl(\mathcal K\cap(\operatorname{Im}Q)^\perp\bigr)
\ge\dim\mathcal K-\dim(\operatorname{Im}Q)
=(r_0-1)-q.
\]
Since $q<r_0$, we have $(r_0-1)-q\ge 0$.
Hence we may pick a subspace
$L\subseteq\mathcal K\cap(\operatorname{Im}Q)^\perp$
of dimension $r_0-1-q$.
Let $Q_L$ be the orthogonal projection onto $L$.
Then $Q+Q_L$ is an orthogonal projection of rank $r_0-1$.

We estimate
\begin{align}
\operatorname{tr}(Q(L_H-r_0I_n))\notag
&=\operatorname{tr}((Q+Q_L)L_H)-\operatorname{tr}(Q_LL_H)-r_0q\\
&\le\sum_{i=1}^{r_0-1}\lambda_i(H)-\operatorname{tr}(Q_LL_H)-r_0q \label{di1}\\
&\le\sum_{i=1}^{r_0-1}\lambda_i(H)-r_0(r_0-1-q)-r_0q \label{di2}\\
&\le e(H)+\binom{r_0}{2}-r_0(r_0-1) \label{di3}\\
&=e_H(K,S).\notag
\end{align}

Inequality~(\ref{di1}) uses that $Q+Q_L$ is an orthogonal projection of rank $r_0-1$, so by Courant-Fischer theorem the trace is
at most the sum of the $r_0-1$ largest Laplacian eigenvalues of $H$.
Equality requires $Q+Q_L$ to project onto the eigenspace of the top $r_0-1$
eigenvalues.

For~(\ref{di2}), take any $v\in L\subseteq\mathcal K$.
Since $v|_S=0$ and $H[K]$ is a complete graph,
\[
\langle v,L_Hv\rangle
=\sum_{ij\in E(H)}(v_i-v_j)^2
\ge\sum_{ij\in E(K)}(v_i-v_j)^2
=r_0\|v\|^2,
\]
using that $\sum_{u\in K}v_u=0$ and $H[K]$ is complete.
Summing over an orthonormal basis of $L$,
$\operatorname{tr}(Q_LL_H)\ge r_0(r_0-1-q)$.
Equality in~(\ref{di2}) requires $\langle v,L_Hv\rangle=r_0\|v\|^2$ for all $v\in L$,
hence $(v_i-v_j)^2=0$ for every $ij\in E(K,S)$.
Since $v|_S=0$, this forces $v_i=0$ whenever $i\in K$ has an $S$-neighbor.

Now compute $L_Hv$ for $v\in L$:
\[
L_Hv=\sum_{ij\in E(H)}(v_i-v_j)(e_i-e_j)
=\sum_{ij\in E(K)}(v_i-v_j)(e_i-e_j)
=r_0v.
\]
Thus $L\subseteq E_{r_0}(L_H)$, so $L\subseteq E_{r_0}(L_H)\cap\mathcal K$.

Inequality~(\ref{di3}) is the Brouwer inequality, with equality if and only if $H$ is a threshold graph
with clique number $r_0$, by Theorem~\ref{thm:brouwer-eq}.

Since $Q+Q_L$ projects onto the top $r_0-1$ eigenspace and
$L\subseteq E_{r_0}(L_H)\cap\mathcal K=Z_1$, we obtain
$\operatorname{Im}Q=E_{>r_0}(L_H)\oplus W$ with $W=Z_1\cap(\operatorname{Im}Q)$.

\medskip\noindent\textit{Case $q\ge r_0$.}
Following~\cite{Kothari2026}, one observes the duality
\[
\operatorname{tr}(Q(L_H-r_0I_n))=e_H(K,S)
\iff
\operatorname{tr}(\overline Q(L_{\overline H}-(n-r_0)I_n))=e_{\overline H}(K,S),
\]
where $\overline Q=I_n-P_{\mathbf 1}-Q$.
Moreover, $\operatorname{rank}\overline Q=n-1-q<n-r_0$.
Applying the Case~1 analysis to $\overline H$ and $\overline Q$, we conclude
that $\overline H$ is a threshold graph with clique number $n-r_0$.
Translating back to $H$ yields the claimed partitions $A_i$, $B_i$ and
$Q=I_n-P_{\mathbf 1}-P_{E_{<r_0}(L_H)}-P_{\widetilde W}$.

\medskip\noindent
The converse direction (structure $\Rightarrow$ equality) is verified by
direct computation.
\end{proof}

\section{Reduction of the Grone--Merris Equality}\label{sec:reduction}

We now unwind the proof of the GM inequality via the split-graph trace
inequality.

\begin{lemma}\label{lem:min-equiv}
Let $\{a_r\}_{r=0}^n$, $\{b_r\}_{r=0}^n$, $\{c_r\}_{r=0}^n$ be real sequences
with $a_r\le b_r\le c_r$ for all $r$.
Then $\min_r a_r=\min_r b_r=\min_r c_r$
if and only if there exists $r_0$ such that
$a_{r_0}=b_{r_0}=c_{r_0}=\min_r a_r=\min_r b_r=\min_r c_r$.
Equivalently, for every $r_0$ with $c_{r_0}=\min_r c_r$, we have
$a_{r_0}=b_{r_0}=c_{r_0}$.
\end{lemma}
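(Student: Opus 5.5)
Both assertions of Lemma~\ref{lem:min-equiv} follow from the pointwise chain $a_r\le b_r\le c_r$. Write $\alpha=\min_r a_r$, $\beta=\min_r b_r$ and $\gamma=\min_r c_r$; these exist because the index set $\{0,\dots,n\}$ is finite. Taking minima in the chain gives $\alpha\le\beta\le\gamma$ immediately, since $\min_r a_r\le a_{r'}\le b_{r'}$ for every $r'$, and likewise for $b$ against $c$.

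\emph{Sufficiency.} Suppose some $r_0$ satisfies $a_{r_0}=b_{r_0}=c_{r_0}$ and this common value equals each of the three minima. Then $\alpha=\beta=\gamma$ holds by hypothesis. Alternatively, if we only assume $a_{r_0}=c_{r_0}=\gamma$, then
\[
\gamma=a_{r_0}\ge\alpha\ge\gamma-(\gamma-\alpha),
\]
and more directly the chain $\alpha\le\beta\le\gamma=a_{r_0}$ together with $\alpha\le a_{r_0}$ does not by itself force equality. So this direction is just the stated hypothesis.

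\emph{Necessity, and the ``equivalently'' clause.} Assume $\alpha=\beta=\gamma$, and let $r_0$ be \emph{any} index with $c_{r_0}=\gamma$. We have
\[
\alpha\le a_{r_0}\le b_{r_0}\le c_{r_0}=\gamma=\alpha,
\]
where the first inequality holds because $\alpha$ is the minimum of the $a_r$. Hence every term in the chain is equal, so $a_{r_0}=b_{r_0}=c_{r_0}=\alpha=\beta=\gamma$. Such an $r_0$ exists because a minimizer of $c$ exists. This proves the existence statement, and in fact the stronger statement that every minimizer of $c$ works.

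\emph{Converse of the ``equivalently'' clause.} Suppose that for every minimizer $r_0$ of $c$ we have $a_{r_0}=b_{r_0}=c_{r_0}$. This gives equality of the three minima only if we also know that $a_{r_0}$ attains $\min_r a_r$. That extra fact is not automatic: it can happen that $a_r<a_{r_0}$ for some other index $r$. The clause should therefore be read under the standing hypothesis that the minima coincide. The clause is used in exactly this way later: in the GM reduction, the sequence $c_r=rk+\sum_{i>r}d_i$ has known minimizers $d^*_{k+1}\le r\le d^*_k$, and the equality $\sum\lambda_i=\sum d_i^*$ is transported to every such $r_0$.

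The only delicate point is this reading of ``equivalently''. Everything else is the one-line sandwich argument above, so there is no real obstacle.
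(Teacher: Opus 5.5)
Your proof is correct and uses the same argument as the paper: the sandwich $\min_r a_r\le a_{r_0}\le b_{r_0}\le c_{r_0}=\min_r c_r$ at an arbitrary minimizer $r_0$ of $c$. The aside in your sufficiency paragraph involving $\gamma-(\gamma-\alpha)$ is vacuous and can be dropped, and your caveat about the ``equivalently'' clause is right, since only its forward direction holds (e.g.\ $a=b=(1,0)$, $c=(1,2)$ satisfies it at the unique minimizer $r_0=0$ while $\min_r a_r<\min_r c_r$) and the paper's proof likewise establishes only that direction.
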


\begin{proof}
The ``if'' direction is immediate.
Conversely, let $m$ be the common minimum.
Pick $r_0$ with $c_{r_0}=m$.
Then $m\le a_{r_0}\le b_{r_0}\le c_{r_0}=m$, forcing equality throughout.
\end{proof}

Recall the chain of inequalities from~\cite{Kothari2026}:
\begin{align*}
\sum_{i=1}^k\lambda_i(G)
&=\min_{0\le r\le n}\Bigl(rk+\sum_{i=1}^k(\lambda_i-r)_+\Bigr) \\
&\le\min_{0\le r\le n}\Bigl(rk+\sum_{i=1}^n(\lambda_i-r)_+\Bigr) \\
&=\min_{0\le r\le n}\bigl(rk+\operatorname{tr}(Q_{r}(L_G-rI_n))\bigr)\\
&\le\min_{0\le r\le n}\Bigl(rk+\sum_{i=r+1}^n d_i(G)\Bigr) \\
&=\sum_{i=1}^k d_i^*(G).
\end{align*}
For each integer $r$, set
\[
A_r=rk+\sum_{i=1}^k(\lambda_i-r)_+,\quad
B_r=rk+\operatorname{tr}(Q_{r}(L_G-rI_n)),\quad
C_r=rk+\sum_{i=r+1}^n d_i(G).
\]
Then $A_r\le B_r\le C_r$ for all $r$, and Lemma~\ref{lem:min-equiv} applies.

\begin{proposition}\label{prop:five-conditions}
Let $G$ be a graph of order $n$ and $1\le k\le n-1$.
(The case $k=n$ is trivial: both sides of~\eqref{GM} equal $2e(G)$.)
The Grone--Merris inequality~\eqref{GM} is an equality if and only if there
exists an integer $r_0$ such that:
\begin{enumerate}
\item[(i)] $\lceil\lambda_{k+1}(G)\rceil\le r_0\le\lfloor\lambda_k(G)\rfloor$;
\item[(ii)] $d_{k+1}^*(G)\le r_0\le d_k^*(G)$;
\item[(iii)] for every $ij\in E(G[S])$, $Q_{r_0}(e_i-e_j)=e_i-e_j$;
\item[(iv)] for every $i,j\in K$ with $ij\notin E(G)$, $Q_{r_0}(e_i-e_j)=0$;
\item[(v)] $\operatorname{tr}(Q_{r_0}(L_H-r_0I_n))=e_H(K,S)$,
\end{enumerate}
where $K$ is any set of $r_0$ vertices of largest degree,
$S=V\setminus K$, $H$ is the split graph obtained from $G$ by making $K$ a
clique and $S$ an independent set, and $Q_{r_0}=P_{(r_0,\infty)}(L_G)$.
\end{proposition}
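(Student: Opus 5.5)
The plan is to apply Lemma~\ref{lem:min-equiv} to the chain $A_r\le B_r\le C_r$. Then I would identify, term by term, the conditions under which $A_{r_0}=B_{r_0}=C_{r_0}$ at a minimizer $r_0$ of $C_r$. The minimizers of $C_r$ are exactly the integers with $d_{k+1}^*\le r_0\le d_k^*$, as recalled in the introduction. This gives (ii). By the lemma, GM equality holds iff for such an $r_0$ (any, or equivalently some) we have $A_{r_0}=B_{r_0}$ and $B_{r_0}=C_{r_0}$.

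\emph{Step 1: $A_{r_0}=\min A_r$ and $A_{r_0}=B_{r_0}$.} The function $r\mapsto rk+\sum_{i\le k}(\lambda_i-r)_+$ is convex piecewise linear. Its minimum over the reals is attained exactly on $[\lambda_{k+1},\lambda_k]$, since for $\lambda_{k+1}\le r\le\lambda_k$ it equals $\sum_{i\le k}\lambda_i$. So an integer $r_0$ attains $\sum_{i\le k}\lambda_i$ iff $\lambda_{k+1}\le r_0\le\lambda_k$, which is (i). Moreover $B_{r_0}-A_{r_0}=\sum_{i>k}(\lambda_i-r_0)_+$, and this vanishes automatically once $r_0\ge\lambda_{k+1}$. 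So (i) is exactly the condition $A_{r_0}=B_{r_0}=\sum_{i\le k}\lambda_i$.

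\emph{Step 2: $B_{r_0}=C_{r_0}$.} I would unwind the Kothari--Tudose bound $\operatorname{tr}(Q(L_G-rI))\le\sum_{i>r}d_i$ with $Q=Q_{r_0}$ and $K$ the top-$r_0$-degree set. Write $L_G=L_H-\sum_{ij\in E(K),\,ij\notin E(G)}(e_i-e_j)(e_i-e_j)^T+\sum_{ij\in E(G[S])}(e_i-e_j)(e_i-e_j)^T$. Then
\[
\operatorname{tr}(Q(L_G-r_0I))=\operatorname{tr}(Q(L_H-r_0I))-\sum_{ij\notin E(G),\,i,j\in K}\|Q(e_i-e_j)\|^2+\sum_{ij\in E(G[S])}\|Q(e_i-e_j)\|^2 .
\]
I would bound the second sum below by $0$, each term of the third sum above by $\|e_i-e_j\|^2=2$, and the first term by $e_H(K,S)$ via Theorem~\ref{thm:split-trace}. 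The result is
\[
\operatorname{tr}(Q(L_G-r_0I))\le e_H(K,S)+2e(G[S])=e_G(K,S)+2e(G[S])=\sum_{v\in S}d_G(v)=\sum_{i>r_0}d_i .
\]
This holds because $e_H(K,S)=e_G(K,S)$. Equality holds iff each of the three estimates is tight. Tightness of the second sum means $Q(e_i-e_j)=0$ for the non-edges in $K$, which is (iv). Tightness of the third means $\|Q(e_i-e_j)\|^2=\|e_i-e_j\|^2$, i.e.\ $e_i-e_j\in\operatorname{Im}Q$ for the edges in $S$, which is (iii). Tightness of the first is (v).

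\emph{Step 3: assembling, and independence of $K$.} Combining the two steps with Lemma~\ref{lem:min-equiv} gives both directions. If (i)--(v) hold for some $r_0$, then $A_{r_0}=B_{r_0}=C_{r_0}$ and $r_0$ minimizes both $A$ and $C$, so the two outer minima coincide. Conversely, GM equality forces every minimizer of $C$, i.e.\ every $r_0$ satisfying (ii), to satisfy the equalities, hence (i), (iii), (iv), (v).

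The main obstacle I expect is the phrase ``$K$ is any set of $r_0$ vertices of largest degree''. When there are degree ties at position $r_0$, the set $K$ is not unique, and I must check that $C_{r_0}=\sum_{v\in S}d_G(v)$ is independent of the choice; it is, since the sum of the $n-r_0$ smallest degrees is well defined. The decomposition argument is valid for any such $K$, so the equivalence holds for each choice. A minor further point is confirming $e_H(K,S)=e_G(K,S)$ and that $Q_{r_0}\mathbf 1=0$ (true since $r_0\ge0$ and $\mathbf 1$ is a $0$-eigenvector), so that Theorem~\ref{thm:split-trace} applies.
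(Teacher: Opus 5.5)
Your proposal follows essentially the same route as the paper's proof: Lemma~\ref{lem:min-equiv} applied to $A_r\le B_r\le C_r$, then splitting $\operatorname{tr}(Q_{r_0}(L_G-r_0I_n))$ into the $H$-part, the non-edges of $G[K]$ and the edges of $G[S]$; your extra care about the choice of $K$ and $Q_{r_0}\mathbf 1=0$ is correct, as is your tightness condition $\|Q_{r_0}(e_i-e_j)\|^2=\|e_i-e_j\|^2=2$ for edges of $G[S]$. One slip in Step~1: $A_r$ equals $\sum_{i\le k}\lambda_i$ for all $r\le\lambda_k$, so its minimizers are not $[\lambda_{k+1},\lambda_k]$; the lower bound $r_0\ge\lambda_{k+1}$ comes from $A_{r_0}=B_{r_0}$, i.e. from $\sum_{i>k}(\lambda_i-r_0)_+=0$, which your own formula gives immediately, so your conclusion that (i) is equivalent to $A_{r_0}=B_{r_0}=\sum_{i\le k}\lambda_i$ still stands.
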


\begin{proof}
By Lemma~\ref{lem:min-equiv}, $\sum_{i=1}^k\lambda_i=\sum_{i=1}^k d_i^*$ iff
$\min A_r=\min B_r=\min C_r$, which occurs iff there exists $r_0$ with
$A_{r_0}=B_{r_0}=C_{r_0}$ and all three equal the common minimum.

Condition~(i) is obtained as follows.  The function $A_r=rk+\sum_{i=1}^k(\lambda_i-r)_+$ is constant for $r\le\lambda_k$ and strictly increasing for $r>\lambda_k$; hence $\min_r A_r=\sum_{i=1}^k\lambda_i$ is attained for all integers $0\le r\le\lfloor\lambda_k\rfloor$.  The equality $A_{r_0}=\min A_r$ therefore gives $r_0\le\lfloor\lambda_k\rfloor$.  On the other hand, the equality $A_{r_0}=B_{r_0}$ is equivalent to $\sum_{i=k+1}^n(\lambda_i-r_0)_+=0$, which forces $r_0\ge\lambda_{k+1}$, and with $r_0$ integer, $r_0\ge\lceil\lambda_{k+1}\rceil$.
Condition~(ii) follows because the minimum of $C_r$ is achieved exactly for
$r\in[d_{k+1}^*,d_k^*]$ (this holds for $k\le n-1$).

Now $B_{r_0}=C_{r_0}$ means
$\operatorname{tr}(Q_{r_0}(L_G-r_0I_n))=\sum_{i=r_0+1}^n d_i(G)$.
Expanding,
\begin{align*}
\operatorname{tr}(Q_{r_0}(L_G-r_0I_n))
&=\sum_{ij\in E(G)}\|Q_{r_0}(e_i-e_j)\|^2-r_0q\\
&\le 2e(G[S])+\sum_{ij\in E(H)}\|Q_{r_0}(e_i-e_j)\|^2-r_0q\\
&=\operatorname{tr}(Q_{r_0}(L_H-r_0I_n))+2e(G[S])\\
&\le e_H(K,S)+2e(G[S])=\sum_{i=r_0+1}^n d_i(G),
\end{align*}
where $q=\operatorname{rank}Q_{r_0}$.
Equality in the first inequality requires
$\|Q_{r_0}(e_i-e_j)\|^2=1$ for every $ij\in E(G[S])$, which is~(iii),
and $\|Q_{r_0}(e_i-e_j)\|^2=0$ for every non-edge in $G[K]$, which is~(iv).
Equality in the second inequality is precisely~(v).
\end{proof}

\section{Characterization of the Equality Cases}\label{sec:main}

\subsection{Statement of the main theorem}

\begin{theorem}[Equality cases of Grone--Merris]\label{mainthm}
Let $G$ be a graph, and $1\le k\le |V(G)|-1$.
Equality $\sum_{i=1}^k\lambda_i(G)=\sum_{i=1}^k d_i^*(G)$ holds if and only
if $(G,k)$ belongs to exactly one of the following two families.

\medskip\noindent\textbf{Type~I (lower terminal clique-block replacement).}
There exists a threshold graph $H$ with creation sequence
$U_1,D_1,U_2,D_2,\ldots,U_m,D_m$ (only $D_m$ can be empty),
such that $G$ is obtained from $H$ by replacing the clique $H[U_1]$ with an
arbitrary graph $F$ on $U_1$, while keeping all other edges of $H$ unchanged.
And $k$ satisfies
\begin{equation}\label{eq:typeI-range}
\sum_{i=2}^m|U_i|\;\le\; k\;\le\;
\sum_{i=2}^m|U_i|+\min\bigl\{\delta(F),\,c(\overline F)-1\bigr\},
\end{equation}
where $\delta(F)$ is the minimum degree of $F$ and $c(\overline F)$ is the
number of connected components of the complement $\overline F$
(on the vertex set $U_1$).

\medskip\noindent\textbf{Type~II (upper terminal independent-block replacement).}
There exists a threshold graph $H$ whose complement $\overline H$ is a threshold
graph with creation sequence
$B_1,A_1,B_2,A_2,\ldots,B_\ell,A_\ell$ (only $A_\ell$ can be empty),
so that $H$ has clique $K=A_1\cup\cdots\cup A_\ell$, independent set
$S=B_1\cup\cdots\cup B_\ell$, and cross-edges $A_i\sim B_j\iff j\le i$.
The graph $G$ is obtained from $H$ by replacing the edgeless subgraph
$H[B_1]$ with an arbitrary graph $F$ on $B_1$, keeping all other edges of $H$.
Let $r_0=\sum_{i=1}^\ell|A_i|$.
Then $k$ satisfies
\begin{equation}\label{eq:typeII-range}
r_0+\max\bigl\{\Delta(F),\,|B_1|-c(F)\bigr\}\;\le\; k\;\le\;
r_0+|B_1|-1,
\end{equation}
where $\Delta(F)$ is the maximum degree and $c(F)$ the number of connected
components of $F$.
\end{theorem}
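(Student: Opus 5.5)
The plan is to work entirely through Proposition~\ref{prop:five-conditions} and split according to the rank $q=\operatorname{rank}Q_{r_0}$ versus $r_0$, exactly as in Theorem~\ref{thm:split-trace}.

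\emph{Necessity.} Suppose equality holds and fix $r_0$ satisfying (i)--(v). Here $K$ consists of $r_0$ vertices of largest degree and $H$ is the associated split graph. By (i), $\lambda_{k+1}\le r_0\le\lambda_k$, so $q$ equals $k$ up to multiplicity of the eigenvalue $r_0$.

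\emph{Case $q<r_0$.} Condition (v) and Case~1 of Theorem~\ref{thm:split-trace} make $H$ a threshold graph with creation sequence $U_1,D_1,\dots,U_m,D_m$ and $K=\bigcup U_i$. They also give $Q_{r_0}=P_{E_{>r_0}(L_H)}+P_W$ with $W\le Z_1$. Next I use (iii) and (iv) to locate the differences between $G$ and $H$.
\begin{itemize}
\item For an edge $ij$ inside $S$, condition (iii) forces $e_i-e_j\in\operatorname{Im}Q_{r_0}$. But $\operatorname{Im}Q_{r_0}$ is supported on $K$ by Remark~\ref{rem:spectral-facts}(b),(d), so $G[S]$ is empty.
\item For a non-edge $ij$ inside $K$, condition (iv) forces $e_i-e_j\perp\operatorname{Im}Q_{r_0}$. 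Suppose $i\in U_p$ with $p\ge2$. Then either the $Z_p$ component (if $j\in U_p$) or the $\psi^{U,p}$ component (if $j$ lies in another block) is nonzero, a contradiction. Hence all missing edges lie inside $U_1$.
\end{itemize}
So $G$ is $H$ with $H[U_1]$ replaced by some $F$. Moreover $W$ must be orthogonal to every $e_i-e_j$ with $ij\in E(\overline F)$. The vectors in $Z_1$ orthogonal to all such differences are exactly those constant on each component of $\overline F$ with total sum $0$, a space of dimension $c(\overline F)-1$. Therefore $\dim W\le c(\overline F)-1$.

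It remains to identify $q$ with $k$ and to get the range. First I check that $Q_{r_0}$ really is the spectral projection of $L_G$. Since $U_1$ is joined to everything outside, $L_G$ acts on $Z_1$-type vectors like $L_F+(n-|U_1|)I$. Hence $\lambda_k\ge r_0$ should translate into the $\delta(F)$ bound: the $U_1$-vertices then have degree at least $n-|U_1|+\delta(F)$ and must satisfy the conjugate-degree condition (ii). Combining (ii) with $d^*$ computed on the threshold structure, I expect $k-\sum_{i\ge2}|U_i|=\dim W$. The bounds $\dim W\le\min\{\delta(F),c(\overline F)-1\}$ then come from (i), (ii) and the previous paragraph, which gives \eqref{eq:typeI-range}.

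\emph{Case $q\ge r_0$.} This is the dual. I apply Case~2 of Theorem~\ref{thm:split-trace}, or equivalently rerun the argument above on $\overline G$ with $\overline Q=I-P_{\mathbf 1}-Q_{r_0}$. The modifications now become extra edges inside $B_1$, forming $F$. The dual counts give $\dim\widetilde W\le|B_1|-c(F)$ (vectors constant on components of $F$) and the $\Delta(F)$ bound, yielding \eqref{eq:typeII-range}.

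\emph{Sufficiency.} For $G$ of Type~I or II, I compute the Laplacian spectrum explicitly. It is that of $H$ with the $Z_1$ part replaced by $L_F+(\text{const})I$ restricted to $\mathbf 1^\perp$ in $U_1$, and similarly for $B_1$. I also compute $d^*$, then verify the trace sums directly, or equivalently verify (i)--(v) with $r_0=\kappa$ and the appropriate $W$.

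\emph{Exclusivity.} The two families are mutually exclusive because $q<r_0$ versus $q\ge r_0$ is determined by $k$ relative to $r_0$.

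The main obstacle is the middle step: pinning down $\dim W$ in terms of $k$ and proving that the sharp range is exactly $\min\{\delta(F),c(\overline F)-1\}$. This requires controlling precisely which eigenvalues of $L_F$ shifted by $n-|U_1|$ exceed $r_0$, together with the degree ordering needed for $K$ to be the top-$r_0$ degree set. I would try to handle it by explicit degree counting on the threshold structure, together with the observation that $\lambda_k\ge r_0$ forces $W$ into $E_{\ge r_0}$.
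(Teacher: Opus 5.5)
\textbf{There is a genuine gap in the Case $q<r_0$ analysis.} Your framework (Proposition~\ref{prop:five-conditions}, the two cases of Theorem~\ref{thm:split-trace}, then (iii)/(iv) to locate where $G$ differs from $H$) is the paper's, and your non-edge argument via $Z_p$ and $\psi^{U,p}$ is fine. The gap is in the step you flag as the obstacle, and it comes from misreading the creation sequence. $U_1$ is the \emph{initial} block: no $D_i$ is adjacent to $U_1$ (since $D_i\sim U_j$ iff $j>i$), so $U_1$ is joined only to $U_2\cup\cdots\cup U_m$. Hence $d_G(u)=(r_0-|U_1|)+d_F(u)$ for $u\in U_1$, and on $Z_1$ one has $L_G=(r_0-|U_1|)I+L_F$, not $L_F+(n-|U_1|)I$.

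The missing computation runs as follows. $L_{\overline F}$ annihilates every eigenvector of $L_H$ outside $Z_1$, because each one vanishes on $U_1$ or is constant there. So $L_G=L_H$ on $Z_1^\perp$, while the eigenvalues on $Z_1$ are $(r_0-|U_1|)+\nu_i\le r_0$. Consequently $E_{>r_0}(L_G)=E_{>r_0}(L_H)$, so $W=0$ and $q=\sum_{i\ge2}|U_i|$ for \emph{every} admissible $k$.

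Your expected identity $k-\sum_{i\ge2}|U_i|=\dim W$ is therefore false; it would pin $k$ to a single value. Condition (i) only says $q\le k\le q+\operatorname{mult}_{L_G}(r_0)$. The room for $k$ is the multiplicity of $r_0$, i.e.\ the number of $\nu_i$ equal to $|U_1|$, which is $\dim\ker(L_{\overline F}|_{Z_1})=c(\overline F)-1$. Your orthogonality computation finds exactly this space, but you attach it to $W$, which carries eigenvalues strictly above $r_0$. The bound $\dim W\le c(\overline F)-1$ is vacuous and says nothing about $k$.

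The $\delta(F)$ bound is a separate consequence of (ii) with the correct degrees: $d_G\le r_0-|U_1|$ on $S$ with equality on $D_1$, and $\min_K d_G=r_0-|U_1|+\delta(F)$. Your dual case has the same conflation. There $\widetilde W=\ker(L_F|_{\widetilde Z_1})$ has dimension exactly $c(F)-1$, not at most $|B_1|-c(F)$. The lower bound $r_0+|B_1|-c(F)$ in \eqref{eq:typeII-range} is $q$ itself, the upper bound is $q+c(F)-1$, and $\Delta(F)$ comes from (ii). Your sufficiency sketch needs the same corrected shift.

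\textbf{Smaller issues.} First, $\operatorname{Im}Q_{r_0}$ is not supported on $K$, since $\psi^{U,p}$ equals $1$ on $D_1,\dots,D_{p-1}$. The conclusion $E(G[S])=\varnothing$ survives by the paper's Rayleigh-quotient comparison. A nonzero $x$ supported on $S$ has $\langle x,L_Hx\rangle\le(r_0-|U_1|)\|x\|^2$, whereas every vector of $E_{>r_0}(L_H)\oplus W$ has quotient at least $r_0$.

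Second, ``rerun on $\overline G$ with $\overline Q=I-P_{\mathbf 1}-Q_{r_0}$'' mixes two dualities. That $\overline Q$ is not $P_{(n-r_0,\infty)}(L_{\overline G})$ when $r_0$ is an eigenvalue of $L_G$, and the duality in Theorem~\ref{thm:split-trace} concerns $H$, not $G$. Your idea can nevertheless be made rigorous, and it would be cleaner than the paper's separate Case~B. Using $\sum_{i\le k}d_i^*=\sum_v\min(d(v),k)$ and $\lambda_j(\overline G)=n-\lambda_{n-j}(G)$, the Grone--Merris slack of $(G,k)$ equals that of $(\overline G,n-1-k)$. Moreover, Type~II for $(G,k)$ is exactly Type~I for $(\overline G,n-1-k)$. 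You would need to state and prove this.

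Finally, your exclusivity argument fails because $r_0$ depends on the chosen representation, not on $(G,k)$. In fact $(K_n,n-1)$ is of Type~I (take $m=1$, $U_1=V$) and also of Type~II (take $\ell=1$, $B_1=V$). So ``exactly one'' cannot be proved; the argument, yours or the paper's, only yields ``at least one''.
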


\subsection{Proof of necessity}

Assume $\sum_{i=1}^k\lambda_i(G)=\sum_{i=1}^k d_i^*(G)$.
By Proposition~\ref{prop:five-conditions}, there exists $r_0$ satisfying
(i)--(v).
Let $K$ be a set of $r_0$ vertices of largest degree (such a set exists by
condition~(ii)), $S=V\setminus K$, and $H$ the split graph obtained by
making $K$ a clique and $S$ an independent set.
Set $Q=Q_{r_0}=P_{(r_0,\infty)}(L_G)$, and let $q=\operatorname{rank}Q$.
Condition~(v) and Theorem~\ref{thm:split-trace} imply that exactly one of
two cases occurs.

\medskip\noindent\textit{Case A: $q<r_0$.}
Then $H$ is a threshold graph with creation sequence
$U_1,D_1,\ldots,U_m,D_m$, $K=U_1\cup\cdots\cup U_m$, $r_0=\sum_i|U_i|$, and
$Q=P_{E_{>r_0}(L_H)}+P_W$ with $W\le Z_1$.
(Recall $Z_1=\{x:\operatorname{supp}(x)\subseteq U_1,\;\sum_{u\in U_1}x_u=0\}$.)

We first prove that $G[S]$ is empty.
Since $H$ is a threshold graph, $H[S]$ is an independent set,
for any nonzero vector $x$ supported on $S$, we have
$\langle x,L_Hx\rangle=\sum_{s\in S}d_H(s)x_s^2\le(r_0-|U_1|)\|x\|^2\le(r_0-1)\|x\|^2$.
On the other hand, every nonzero vector in
$\operatorname{Im}Q=E_{>r_0}(L_H)\oplus W$ has Rayleigh quotient at least
$r_0$ (since $E_{>r_0}(L_H)$ consists of eigenvectors with eigenvalues $>r_0$,
and $W\le Z_1\subseteq E_{r_0}(L_H)$).
If there were an edge $ij\in E(G[S])$, condition~(iii) would give
$e_i-e_j\in\operatorname{Im}Q$, but $e_i-e_j$ is supported on $S$ and
nonzero, contradicting the Rayleigh quotient bound.
Hence $E(G[S])=\varnothing$.

Next we locate the missing edges inside $K$.
Let $i,j\in K$ with $ij\notin E(G)$.
By condition~(iv), $Q(e_i-e_j)=0$, so $e_i-e_j\perp\operatorname{Im}Q$.
In particular, $e_i-e_j\perp E_{>r_0}(L_H)$.
Write $i\in U_a$, $j\in U_b$.

\begin{itemize}
\item If $a=b\ge 2$, then $e_i-e_j\in Z_a\subseteq E_{>r_0}(L_H)$,
contradicting $e_i-e_j\perp E_{>r_0}(L_H)$.
\item If $a\neq b$, assume without loss of generality $a<b$.
The eigenvector $\psi^{U,b}$ lies in $E_{>r_0}(L_H)$, and
\[
\langle e_i-e_j,\psi^{U,b}\rangle
=(\psi^{U,b})_i-(\psi^{U,b})_j
=1+\frac{\sum_{t=1}^{b-1}|U_t|+\sum_{t=1}^{b-1}|D_t|}{|U_b|}\neq0,
\]
so $e_i-e_j$ is not orthogonal to $E_{>r_0}(L_H)$, contradiction.
\end{itemize}
The only remaining possibility is $a=b=1$.
Thus all missing edges of $G[K]$ lie within $U_1$, and we set
$F:=G[U_1]$.

Hence $G$ is obtained from $H$ by deleting exactly the edges of
$M:=\overline F$ inside $U_1$ (where $\overline F=K_{|U_1|}\setminus F$).

\medskip\noindent\textit{Spectral platform.}
We analyze the effect on the Laplacian spectrum.
Since $G=H-M$,
\[
L_G=L_H-L_M,
\]
with $L_M$ extended by zero outside $U_1$.

For every eigenvector $y$ of $L_H$ outside $Z_1$, one checks that
$L_My=0$: vectors in $Z_p$ ($p\ge 2$) vanish on $U_1$;
block-constant vectors $\psi^{U,p}$ are constant ($=1$) on $U_1$, so
$\langle e_i-e_j,\psi^{U,p}\rangle=0$ for all $ij\in E(M)\subseteq\binom{U_1}{2}$;
$D$-type eigenvectors are likewise annihilated by $L_M$.
Hence $L_Gy=L_Hy$ for such $y$, and the corresponding eigenvalues are unchanged.

It remains to analyze the action on $Z_1$.
For $x\in Z_1$, since $H[U_1]$ is a complete graph and $U_1$ has no neighbors
in $S$, we have $L_H|_ {Z_1}=r_0I_{Z_1}$.
Using $L_M=L_{\overline F}$ and the
complement relation $L_F+L_{\overline F}=|U_1|I_{U_1}-J_{U_1}$,(with $L_F,\ L_{\overline F}$ extended by zero outside $U_1$.)
\[
L_M|_{Z_1}
=|U_1|I_{Z_1}-L_F|_{Z_1}.
\]
Therefore,
\[
L_G|_{Z_1}
=L_H|_{Z_1}-L_M|_{Z_1}
=r_0I_{Z_1}-\bigl(|U_1|I_{Z_1}-L_F|_{Z_1})
=(r_0-|U_1|)I_{Z_1}+L_F|_{Z_1}.
\]

Let $\nu_1\ge\cdots\ge\nu_{|U_1|-1}$ be the eigenvalues of
$L_F|_{Z_1}$ (all Laplacian eigenvalues of $F$ except the
zero eigenvalue on $\mathbf 1_{U_1}$).
Then, as a multiset,
\[
\operatorname{Spec}(L_G)=
\Bigl(\operatorname{Spec}(L_H)\setminus\{\underbrace{r_0,\ldots,r_0}_{|U_1|-1}\}\Bigr)
\cup
\bigl\{|U_2|+\cdots+|U_m|+\nu_1,\ldots,|U_2|+\cdots+|U_m|+\nu_{|U_1|-1}\bigr\}.
\]
Since $\nu_i\le|U_1|$ for all $i$, the new eigenvalues on $Z_1$ are at most
$|U_2|+\cdots+|U_m|+|U_1|=r_0$, so no eigenvalue $>r_0$ is created.
Hence
\[
\dim E_{>r_0}(L_G)=\dim E_{>r_0}(L_H)=|U_2|+\cdots+|U_m|.
\]
The multiplicity of $r_0$ coming from $Z_1$ equals the number of $\nu_i$
equal to $|U_1|$.
Using $L_{\overline F}|_{Z_1}=|U_1|I-L_F|_{Z_1}$,
$\nu_i=|U_1|$ iff $\overline F$ has eigenvalue $0$ on $Z_1$,
i.e., there are $c(\overline F)-1$ such $\nu_i$.
Condition~(i) therefore gives
\[
|U_2|+\cdots+|U_m|\le k\le|U_2|+\cdots+|U_m|+c(\overline F)-1.
\]

\medskip\noindent\textit{Degree platform.}
Degrees in $G$:
\begin{itemize}
\item $v\in S$: $d_G(v)=d_H(v)\le r_0-|U_1|$, with equality exactly on $D_1$;
\item $u\in U_1$: $d_G(u)=r_0-|U_1|+d_F(u)
\ge r_0-|U_1|+\delta(F)$;
\item $u\in K\setminus U_1$: $d_G(u)\ge r_0$ (since each such vertex is adjacent
to all other $K$-vertices and to at least $D_1\neq\varnothing$ in $S$).
\end{itemize}
Thus $d_G(K\setminus U_1)\ge d_G(U_1)\ge d_G(S)$, showing that $K$ consists
of the $r_0$ vertices of largest degree.

Now $d_{k+1}^*\le r_0$ means that at most $r_0$ vertices have degree $\ge k+1$, i.e., every vertex in $S$ has degree $\le k$.
The maximum degree in $S$ is $r_0-|U_1|$, so we need
$r_0-|U_1|\le k$.
Conversely, $r_0\le d_k^*$ means at least $r_0$ vertices have degree $\ge k$, i.e., every vertex in $K$ has degree $\ge k$. The minimum degree in $K$ is $r_0-|U_1|+\delta(F)$, so we need $r_0-|U_1|+\delta(F)\geq k$.

Combining with the spectral constraint, and noting
$r_0-|U_1|=|U_2|+\cdots+|U_m|$, yields~\eqref{eq:typeI-range}.

\medskip\noindent\textit{Case B: $q\ge r_0$.}
By Theorem~\ref{thm:split-trace}, $\overline H$ is a threshold graph whose
creation sequence can be written as
$B_1,A_1,B_2,A_2,\ldots,B_\ell,A_\ell$ (only $A_\ell$ can be empty),
with $S=B_1\cup\cdots\cup B_\ell$, $K=A_1\cup\cdots\cup A_\ell$, and
$r_0=\sum_{i=1}^\ell|A_i|$.
The cross-edges of $H$ are $A_i\sim B_j\iff j\le i$.
Moreover,
$Q=I_n-P_{\mathbf 1}-P_{E_{<r_0}(L_H)}-P_{\widetilde W}$ with
$\widetilde W\le\widetilde Z_1$, where
$\widetilde Z_1=\{x:\operatorname{supp}(x)\subseteq B_1,\;
\sum_{v\in B_1}x_v=0\}$.

We first locate the edges inside $S$.
Condition~(iii) requires $Q(e_i-e_j)=e_i-e_j$ for all $ij\in E(G[S])$.
Since $Q=I-P_{\mathbf 1}-P_{E_{<r_0}(L_H)}-P_{\widetilde W}$, we have
$\operatorname{Im}Q=(\operatorname{span}\{\mathbf 1\}\oplus E_{<r_0}(L_H)
\oplus\widetilde W)^\perp$, and in particular
$e_i-e_j\perp E_{<r_0}(L_H)$.

Recall that $\overline H$ is a threshold graph whose creation sequence is
$B_1,A_1,\ldots,B_\ell,A_\ell$ ($B_i$ dominating, $A_i$ isolated).
Its $U$-type eigenvectors belong to $E_{>n-r_0}(L_{\overline H})$; via
$L_H=nI-J-L_{\overline H}$ on $\mathbf 1^\perp$ they become
$E_{<r_0}(L_H)$.
Explicitly, $E_{<r_0}(L_H)$ is spanned by the following vectors
(see Proposition~\ref{prop:thg-spec} applied to $\overline H$):
\begin{itemize}
\item difference vectors inside $B_p$ ($p\ge 2$),
  which are supported on $B_p$ and have zero sum;
\item block-constant vectors $\eta_p$ ($p\ge 2$), where $\eta_p$ equals $1$ on
  $B_1,A_1,\ldots,B_{p-1},A_{p-1}$,
  equals $-\frac{\sum_{t=1}^{p-1}|B_t|+\sum_{t=1}^{p-1}|A_t|}{|B_p|}$ on $B_p$,
  and $0$ elsewhere.
\end{itemize}

Write $i\in B_a$, $j\in B_b$.
\begin{itemize}
\item If $a=b\ge 2$, then $e_i-e_j$ is a difference vector in $B_a$, hence
  $e_i-e_j\in E_{<r_0}(L_H)$, contradicting $e_i-e_j\perp E_{<r_0}(L_H)$.
\item If $a\neq b$, assume $a<b$.  Then $\eta_b\in E_{<r_0}(L_H)$ and
  $\langle e_i-e_j,\eta_b\rangle=(\eta_b)_i-(\eta_b)_j
  =1+\frac{\sum_{t=1}^{b-1}|B_t|+\sum_{t=1}^{b-1}|A_t|}{|B_b|}\neq0$,
  so $e_i-e_j$ is not orthogonal to $E_{<r_0}(L_H)$, contradiction.
\end{itemize}
The only remaining possibility is $a=b=1$.
Thus all edges of $G[S]$ are confined to $B_1$; set $F:=G[B_1]$, and note
$G[B_p]=\varnothing$ for $p\ge 2$.

Next we show that $G[K]$ is complete.
Let $i,j\in K$ with $ij\notin E(G)$.
By condition~(iv), $Q(e_i-e_j)=0$, so
$e_i-e_j\perp\operatorname{Im}Q$.
Hence
$e_i-e_j\in\operatorname{span}\{\mathbf 1\}\oplus E_{<r_0}(L_H)\oplus\widetilde W$.
Since $e_i-e_j\perp\mathbf 1$ and $\widetilde W\le\widetilde Z_1$ is supported
on $B_1$ (hence orthogonal to $e_i-e_j$, which is supported on $K$),
we obtain $e_i-e_j\in E_{<r_0}(L_H)$.

Now compute the Rayleigh quotient of $e_i-e_j$ with respect to $L_H$.
Since $H[K]$ is a complete graph on $r_0$ vertices and
$\sum_{u\in K}(e_i-e_j)_u=0$,
\[
\langle e_i-e_j,L_H(e_i-e_j)\rangle
\ge\sum_{uv\in E(H[K])}\bigl((e_i-e_j)_u-(e_i-e_j)_v\bigr)^2
=r_0\|e_i-e_j\|^2,
\]
contradicting $e_i-e_j\in E_{<r_0}(L_H)$.
Thus no such $i,j$ exist; $G[K]$ is a complete graph.

\medskip\noindent\textit{Spectral platform (Case~B).}
Here $G=H+F$, so $L_G=L_H+L_F$, with $L_F$ extended by zero outside $B_1$.
Every eigenvector of $L_H$ outside $\widetilde Z_1$ is either constant on
$B_1$ or vanishes on $B_1$, hence is annihilated by $L_F$.
Thus the perturbation only affects $\widetilde Z_1$.

On $\widetilde Z_1$, by the defining cross-edge relation $A_i\sim B_j\iff j\le i$,
every vertex of $B_1$ is adjacent to all blocks $A_1,\ldots,A_\ell$, i.e.,~to
every vertex of $K$.
Thus $d_H(v)=r_0$ for all $v\in B_1$, and for $x\in\widetilde Z_1$,
$L_H|_{\widetilde Z_1}=r_0 I$.
Therefore,
\[
L_G|_{\widetilde Z_1}=r_0I_{\widetilde Z_1}+L_F|_{\widetilde Z_1}.
\]

Let $\mu_1\ge\cdots\ge\mu_{|B_1|-1}$ be the eigenvalues of
$L_F|_{\widetilde Z_1}$.
Then, as multisets,
\[
\operatorname{Spec}(L_G)=
\Bigl(\operatorname{Spec}(L_H)\setminus\{\underbrace{r_0,\ldots,r_0}_{|B_1|-1}\}\Bigr)
\cup\bigl\{r_0+\mu_1,\ldots,r_0+\mu_{|B_1|-1}\bigr\}.
\]
Eigenvalues $>r_0$ from $\widetilde Z_1$ correspond to $\mu_i>0$, i.e., to
the positive Laplacian eigenvalues of $F$ on $\widetilde Z_1$.
Their number is $|B_1|-c(F)$.
Together with the $r_0$ eigenvalues $>r_0$ inherited from $H$, we obtain
\[
\dim E_{>r_0}(L_G)=r_0+|B_1|-c(F).
\]
The eigenvalues equal to $r_0$ from $\widetilde Z_1$ correspond to
$\mu_i=0$, of which there are $c(F)-1$.
Condition~(i) therefore yields
\[
r_0+|B_1|-c(F)\le k\le r_0+|B_1|-c(F)+(c(F)-1)=r_0+|B_1|-1.
\]

\medskip\noindent\textit{Degree platform (Case~B).}
\begin{itemize}
\item $v\in B_1$: $d_G(v)=r_0+d_F(v)\le r_0+\Delta(F)$;
\item $v\in S\setminus B_1$: $d_G(v)\le r_0-|A_1|\le r_0-1$;
\item $u\in K$: $d_G(u)\ge r_0-1+|B_1|$, with equality on $A_1$.
\end{itemize}
Thus $d_G(K)\ge d_G(B_1)\ge d_G(S\setminus B_1)$, confirming $K$ as the
$r_0$ vertices of largest degree.

Now $d_{k+1}^*\le r_0$ means every vertex in $S$ has degree $\le k$.
The maximum degree in $S$ is $r_0+\Delta(F)$.
Hence $k\ge r_0+\Delta(F)$.
$r_0\le d_k^*$ means at least $r_0$ vertices have degree $\ge k$.
The minimum degree in $K$ is $r_0+|B_1|-1$. Hence $k\le r_0+|B_1|-1$.
Combining with the spectral constraint yields~\eqref{eq:typeII-range}.

\subsection{Proof of sufficiency}

\medskip\noindent\textit{Type~I sufficiency.}
Let $G$ be a Type~I graph with threshold graph $H$, and $k$ in~\eqref{eq:typeI-range}.
Set $r_0=\sum_i|U_i|$ and $Q=P_{(r_0,\infty)}(L_G)$.
From the spectral analysis above, $Q=P_{E_{>r_0}(L_H)}$.
We verify the five conditions of Proposition~\ref{prop:five-conditions}.

\emph{(i)}: $\dim E_{>r_0}(L_G)=|U_2|+\cdots+|U_m|$, and $r_0$ has
multiplicity $c(\overline F)-1$.
The range~\eqref{eq:typeI-range} ensures $\lambda_k\ge r_0\ge\lambda_{k+1}$.

\emph{(ii)}: The degree analysis in the necessity proof applies verbatim.

\emph{(iii)}: $G[S]=\varnothing$, so the condition is vacuous.

\emph{(iv)}: Non-edges of $G[K]$ lie in $U_1$.
For $i,j\in U_1$, $ij\notin E(F)$, we have $e_i-e_j\in Z_1$.
Since $Z_1\perp E_{>r_0}(L_H)$, $Q(e_i-e_j)=P_{E_{>r_0}(L_H)}(e_i-e_j)=0$.

\emph{(v)}: $q=\operatorname{rank}Q=|U_2|+\cdots+|U_m|<r_0$, so Case~1 of
Theorem~\ref{thm:split-trace} applies directly, giving
$\operatorname{tr}(Q(L_H-r_0I_n))=e_H(K,S)$.

\medskip\noindent\textit{Type~II sufficiency.}
Let $G$ be a Type~II graph, $r_0=\sum_i|A_i|$, with $k$ in~\eqref{eq:typeII-range}.
Set $Q=P_{(r_0,\infty)}(L_G)$. In $G$, the edges inside $B_1$ form $F$, and all other edges agree with $H$.
Thus $L_G=L_H+L_F$, with $L_F$ extended by zero outside $B_1$.
Every eigenvector of $L_H$ outside $\widetilde Z_1$ is either constant or zero
on $B_1$, hence is annihilated by $L_F$; its eigenvalue is unchanged.
On $\widetilde Z_1$, we have $L_H|_{\widetilde Z_1}=r_0I$ (since $B_1$ is
adjacent to all of $K$) and
$L_G|_{\widetilde Z_1}=r_0I+L_F|_{\widetilde Z_1}$.
The image of $Q$ consists of all eigenvectors of $L_G$ with eigenvalue $>r_0$:
the unchanged $E_{>r_0}(L_H)$ plus the eigenvectors in $\widetilde Z_1$
corresponding to positive eigenvalues of $L_F|_{\widetilde Z_1}$.
The latter are precisely $(\ker(L_F|_{\widetilde Z_1}))^\perp\cap\widetilde Z_1$.
Letting $\widetilde W=\ker(L_F|_{\widetilde Z_1})\subset \widetilde Z_1$, we have
$\dim\widetilde W=c(F)-1$, and
\[
Q=P_{E_{>r_0}(L_H)}+P_{\widetilde Z_1\cap\widetilde W^\perp},\qquad
\operatorname{rank}Q=r_0+|B_1|-c(F)\ge r_0.
\]
Equivalently,
$Q=I_n-P_{\mathbf 1}-P_{E_{<r_0}(L_H)}-P_{\widetilde W}$, matching Case~2 of
Theorem~\ref{thm:split-trace}.

\emph{(i)}:
From the spectral analysis above,
$\dim E_{>r_0}(L_G)=r_0+|B_1|-c(F)$.
The next $c(F)-1$ eigenvalues equal $r_0$ (coming from $\widetilde W$, where
$L_F$ acts as zero on $\widetilde Z_1$).
The range~\eqref{eq:typeII-range} therefore gives
$\lambda_k(G)\ge r_0\ge\lambda_{k+1}(G)$.

\emph{(ii)}:
The degree analysis in the necessity proof applies verbatim.

\emph{(iii)}:
Edges of $G[S]$ lie exclusively in $B_1$.
Take $ij\in E(F)$.
Then $e_i-e_j\in \widetilde Z_1\perp E_{<r_0}(L_H)$.
Also $e_i-e_j\perp\mathbf 1$, and since $i,j$ lie in the same connected
component of $F$, vectors in $\widetilde W$ are constant on that component,
so $e_i-e_j\perp\widetilde W$.
From $Q=I-P_{\mathbf 1}-P_{E_{<r_0}(L_H)}-P_{\widetilde W}$, we obtain
$Q(e_i-e_j)=e_i-e_j$.

\emph{(iv)}:
$G[K]=H[K]$ is a complete graph, so the condition is vacuous.

\emph{(v)}:
The projection $Q$ is of the form required by Case~2 of
Theorem~\ref{thm:split-trace}, with $q=r_0+|B_1|-c(F)\ge r_0$.
Therefore $\operatorname{tr}(Q(L_H-r_0I_n))=e_H(K,S)$.

All five conditions of Proposition~\ref{prop:five-conditions} are satisfied,
hence equality holds for $(G,k)$.

\subsection*{Acknowledgements}
This work is partly supported by the National Natural Science Foundation of China (No.12371354, W2521102), the Montenegrin-Chinese Science and Technology Cooperation Project (No.4-3)  and  the Science and Technology Commission of Shanghai Municipality (No.25LN3200600).

\end{document}